\newtheorem{prop}{Proposition}
\newtheorem{cor}[prop]{Corollary}
\newtheorem{lem}[prop]{Lemma}
\newtheorem{thm}[prop]{Theorem}
\theoremstyle{definition}
\newcommand*{\NN}{\mathbb{N}}
\newcommand*{\RR}{\mathbb{R}}
\newcommand*{\PP}{\mathbb{P}}
\newcommand*{\EE}{\mathbb{E}}
\newcommand*{\1}{\mathbf{1}}
\newcommand*{\scrH}{\mathscr{H}}
\newcommand*{\st}{\,:\,}
\newcommand*{\rootvert}{\rho}
\newcommand*{\parent}[1]{\pi(#1)}
\newcommand*{\DeltaKR}{\widetilde\Delta}
\newcommand*{\G}{\mathrm{G}} 
\DeclareMathOperator{\Prob}{Prob}
\DeclareMathOperator{\Var}{Var}
\DeclareMathOperator{\Lip}{Lip}
\DeclareMathOperator{\br}{br}
\DeclareMathOperator{\bargr}{\overline{gr}}
\DeclareMathOperator{\maxgr}{maxgr}
\DeclareMathOperator{\arctanh}{arctanh}
\newcommand*{\wg}{\mathrm{w}} 
\DeclarePairedDelimiter{\abs}{\lvert}{\rvert}
\DeclarePairedDelimiter{\norm}{\|}{\|}
\DeclarePairedDelimiterX{\inprod}[2]{\langle}{\rangle}{#1,\ #2}
\title{Concentration of Broadcast Models on Trees}
\author{Christopher Shriver}
\date{\today}                                           
\begin{document}

\begin{abstract}
	An inequality of K. Marton \cite{marton1996} shows that the joint distribution of a Markov chain with uniformly contracting transition kernels exhibits concentration. We prove an analogous inequality for broadcast models on finite trees. We use this inequality to develop a condition for the sequence of depth-$k$ marginals of a broadcast model on a rooted infinite tree to form a normal L\'evy family in terms of the Lipschitz constants of the transition kernels and the growth rate of the tree.
\end{abstract}

\maketitle

\tableofcontents

\section{Introduction and Main Results}

Let $\scrH$ be a Polish metric space of diameter at most 1. We give the product space $\scrH^n$ the normalized Hamming metric
	\[ d(x,y) \coloneqq \frac{1}{n} \sum_{i=1}^n d(x_i, y_i) .\]
We denote the space of Borel probability measures on $\scrH^n$ by $\Prob(\scrH^n)$ and define on this space the transportation metric
	\[ \bar{d}(\mu,\nu) = \inf_{\lambda} \int_{(\scrH^n)^2} d(x,y)\, d\lambda(x,y) \]
where the infimum is over couplings of $\mu$ and $\nu$ (see \cite{dudley2004} Section 11.8). We also define the relative entropy between $\mu,\nu \in \Prob(\scrH^n)$ by
	\[ D(\mu \| \nu) = \int \log \frac{d\mu}{d\nu}\, d\mu \]
when $\mu \ll \nu$; otherwise we set $D(\mu \| \nu) = +\infty$ (see \cite{cover2006} for the discrete case or \cite{dembo2010a} Appendix D.3 for the continuous version used here).

A concentration inequality due to McDiarmid (Theorem 3.1 in \cite{mcdiarmid1998}) implies that if $\nu$ is a product probability measure on $\scrH^n$ then for any 1-Lipschitz $f \colon \scrH^n \to \RR$ with $\int f\, d\nu = 0$ we have
	\[ \nu \{ f > \lambda \} \leq e^{-2 \lambda^2 n} \quad \text{for all } \lambda > 0 . \]
This can be viewed as a quantitative refinement of the weak law of large numbers.
Later, Marton \cite{marton1996} showed that if $\nu \in \Prob(\scrH^n)$ is the joint distribution of a Markov chain $(X_1, \ldots, X_n)$ taking values in $\scrH$ then
	\[ \bar{d}(\mu,\nu) \leq \frac{1}{a} \sqrt{\frac{1}{2n} D(\mu \| \nu)} \quad \text{for all } \mu \in \Prob(\scrH^n) \]
where $a \in (0,1]$ is a measure of contractivity of the Markov kernels.

These results have very different statements and proofs; McDiarmid's inequality is proven by bounding the exponential moments $\int e^{\lambda f}\, d\nu$, while Marton's proof uses a coupling argument and makes no mention of Lipschitz functions. However, a later result of Bobkov and G\"otze (Theorem 1.3 of \cite{bobkov1999}) shows that a transportation-entropy inequality such as Marton's is equivalent to an exponential moment bound of the form used to prove McDiarmid's inequality; therefore in retrospect we can view Marton's inequality as a generalization of McDiarmid's.

In the present paper we generalize this inequality further to broadcast models on trees. Here, an $\scrH$-valued broadcast model indexed by a finite rooted tree $T = (V,E)$ is a family of $\scrH$-valued random variables $(X_v)_{v \in V}$ such that if $v \in V$ is a vertex with children $w_1, \ldots, w_k$ then $X_{w_1}, \ldots, X_{w_k}$ are conditionally independent given $X_v$ and their distributions are determined by the value of $X_v$. 
We call the joint distribution $\nu \in \Prob(\scrH^V)$ a Markov measure indexed by $T$. See Section \ref{sec:chains} for a more precise definition.

Broadcast models are natural models for processes such as communications networks or phylogenetic trees, where information originates at a root node and is distributed from each node to its children with some probability of error. The term ``broadcast model'' is also often used to refer to the special case where $\scrH = \{0,1\}$ and each ``bit'' $X_v$ is equal to the bit at the parent of $v$ with probability $1-p$ and is equal to the opposite bit with probability $p$. In the present paper we refer to this case as the Ising model; see Section \ref{sec:ising}.

Our main result is stated precisely as follows:
	
\begin{thm}
\label{thm:main}
	Let $T = (V,E)$ be a tree with $n$ vertices, and let $\nu$ be a $\scrH$-valued Markov measure indexed by $T$ with $b$-Lipschitz transition kernels.
	Then for any 1-Lipschitz $f \colon \scrH^V \to \RR$ with $\int f\, d\nu = 0$ we have
		\[ \int e^{n \lambda f} \, d\nu \leq e^{\lambda^2 \Delta^2 /8}, \]
	where $\Delta$ is a function of $b$ and $T$ defined in Section \ref{sec:delta} below. Equivalently,
		\[ \bar{d}(\mu,\nu) \leq \frac{\Delta}{n} \sqrt{\frac{1}{2} D(\mu \| \nu) }\]
	for all $\mu \in \Prob(\scrH^V)$.
\end{thm}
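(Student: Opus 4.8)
The plan is to establish the exponential-moment bound $\int e^{n\lambda f}\,d\nu \le e^{\lambda^2\Delta^2/8}$ directly, via a Doob-martingale argument of Azuma--Hoeffding type adapted to the tree, and then to deduce the transportation inequality from it by invoking the Bobkov--G\"otze duality (Theorem 1.3 of \cite{bobkov1999}): setting $s = n\lambda$, the exponential bound reads $\int e^{sf}\,d\nu \le e^{s^2 C/2}$ with $C = \Delta^2/(4n^2)$, which is equivalent to $\bar d(\mu,\nu) \le \sqrt{2C\,D(\mu\|\nu)} = \tfrac{\Delta}{n}\sqrt{\tfrac12 D(\mu\|\nu)}$ for all $\mu \in \Prob(\scrH^V)$ (the metric on $\scrH^V$ being the normalized Hamming metric, with respect to which $f$ is $1$-Lipschitz). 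So everything reduces to the exponential bound.

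Fix an enumeration $v_1,\dots,v_n$ of $V$ in which every vertex precedes its children (a linear extension of the tree order, with $v_1 = \rootvert$). Let $\calT_i = \sigma(X_{v_1},\dots,X_{v_i})$, with $\calT_0$ trivial, and form the Doob martingale $M_i = \EE_\nu[f \mid \calT_i]$, so that $f - \int f\,d\nu = \sum_{i=1}^n D_i$ with $D_i = M_i - M_{i-1}$. The crux is to show that, conditionally on $\calT_{i-1}$, the difference $D_i$ is centered and takes values in an interval of length at most $\Delta_i/n$, where
	\[ \Delta_i \coloneqq \sum_{u \in T_{v_i}} b^{\,\mathrm{dist}_T(v_i,u)} \]
and $T_{v_i}$ is the subtree of $T$ rooted at $v_i$. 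Granting this, Hoeffding's lemma gives $\EE_\nu[e^{n\lambda D_i}\mid\calT_{i-1}] \le e^{(n\lambda)^2(\Delta_i/n)^2/8} = e^{\lambda^2\Delta_i^2/8}$, and peeling off the conditional expectations one index at a time (starting from $i=n$) yields $\int e^{n\lambda f}\,d\nu \le e^{\lambda^2(\sum_i \Delta_i^2)/8}$. Since $\Delta_i$ depends only on the vertex $v_i$, not on the chosen enumeration, this produces $\Delta^2 = \sum_{v \in V}\bigl(\sum_{u \in T_v} b^{\mathrm{dist}_T(v,u)}\bigr)^2$, which one checks coincides with the quantity $\Delta$ defined in Section~\ref{sec:delta}.

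To bound the oscillation of $D_i$, condition on $\calT_{i-1}$ (which, as $v_1,\dots,v_{i-1}$ contains $\parent{v_i}$, determines $X_{\parent{v_i}}$) and regard $D_i$ as a function of $X_{v_i}$. By the tree Markov property and the factorization of $\nu$, conditionally on $\calT_{i-1}$ and on $\{X_{v_i}=x\}$ the family $(X_u)_{u \notin T_{v_i}}$ has a law independent of $x$ and independent of $(X_w)_{w \in T_{v_i}\setminus\{v_i\}}$, while $(X_w)_{w\in T_{v_i}}$ is a broadcast model on $T_{v_i}$ with root value $x$. Hence for $x,x'\in\scrH$ the two conditional laws of $X$ (given $X_{v_i}=x$ resp.\ $x'$) can be coupled by keeping all coordinates outside $T_{v_i}$ equal and coupling the two broadcast models on $T_{v_i}$ greedily: set the root values to $x$ and $x'$, and having coupled a vertex $u$ at distance $\ell$ below $v_i$, extend to each child $w$ by an optimal coupling of $\kappa_{(u,w)}(X_u,\cdot)$ and $\kappa_{(u,w)}(X_u',\cdot)$, whose cost is at most $b\,d(X_u,X_u')$ by $b$-Lipschitzness. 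Induction on $\ell$ gives $\EE\, d(X_u,X_u') \le b^{\mathrm{dist}_T(v_i,u)}d(x,x') \le b^{\mathrm{dist}_T(v_i,u)}$ for each $u\in T_{v_i}$, using $\diam\scrH\le 1$. Since $f$ is $1$-Lipschitz for the normalized Hamming metric,
	\[ \bigl|\EE_\nu[f\mid\calT_{i-1},X_{v_i}=x] - \EE_\nu[f\mid\calT_{i-1},X_{v_i}=x']\bigr| \le \frac1n\sum_{u\in T_{v_i}}\EE\, d(X_u,X_u') \le \frac{\Delta_i}{n}, \]
and recentering by $\EE_\nu[\cdot\mid\calT_{i-1}]$ does not enlarge the range, giving exactly the needed bound on $D_i$.

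The routine ingredients are Hoeffding's lemma, the telescoping of exponential moments along the martingale, and the measurable choice of optimal (or $\varepsilon$-optimal) couplings of the kernels, available since $\scrH$ is Polish. The step demanding the most care is the conditional-independence bookkeeping of the previous paragraph: one must verify that conditioning on the already-revealed coordinates $X_{v_1},\dots,X_{v_{i-1}}$ leaves the subtree below $v_i$ behaving like a fresh broadcast model whose only dependence on the past runs through $X_{v_i}$, so that the ``freeze everything outside $T_{v_i}$, couple inside $T_{v_i}$'' strategy is legitimate; this is precisely where the hypothesis that the enumeration respects the tree order enters.
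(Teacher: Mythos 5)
Your proof is correct, but it takes a genuinely different route from the paper's. The paper argues bottom-up by induction on the depth of the tree: at each step it integrates out the entire deepest layer $L_k$ using a weighted-Hamming McDiarmid inequality for product measures (Proposition \ref{prop:inhomog}), and shows that the partially integrated function remains Lipschitz for a weighted Hamming metric in which the new leaves carry weight $\delta(v)$; the descendant generating function enters through its recurrence $\delta(v) = 1 + b\sum_{\pi(w)=v}\delta(w)$, and no explicit couplings along the tree are needed --- only transportation distances between product kernels (Lemma \ref{lem:dbarproduct}) and the Bobkov--G\"otze duality at the end. You instead go top-down with a Doob martingale that reveals one vertex at a time in a tree-respecting order, and bound each conditional range by $\delta(v_i)/n$ via a greedy coupling of two broadcast models on the subtree $T_{v_i}$; here $\delta(v)$ appears directly as the total influence of the spin at $v$ on its descendants. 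Your argument is essentially the Kontorovich--Ramanan martingale method discussed in Section \ref{sec:comparison}, but retaining the full vector of conditional ranges in Azuma--Hoeffding rather than bounding $\sum_i c_i^2$ by $n\max_i c_i^2$; that refinement is exactly what yields $\Delta^2=\sum_v\delta(v)^2$ in place of $n\norm{\delta}_{\ell^\infty}^2$ and so recovers the paper's constant, and it makes the relation to the $\eta$-mixing matrix $\DeltaKR$ transparent (your $\sum_i\Delta_i^2$ is $\norm{\DeltaKR\1_V}_2^2$). The price is the conditional-independence bookkeeping for downward-closed revealed sets and the measurable selection of (near-)optimal couplings along the subtree, both of which you correctly flag and which do go through since $\scrH$ is Polish; the paper's route avoids these technicalities at the cost of the weighted-metric induction.
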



The tail bound resulting from Theorem \ref{thm:main} via the exponential moment method is
	\[ \nu \big\{ \abs[\big]{f - \int f\, d\nu} > \varepsilon \big\} \leq 2 e^{-2 n^2 \varepsilon^2 / \Delta^2} \quad \forall f \in \Lip_1 \big( \scrH^{V} \big) . \]

McDiarmid and Marton's inequalities can be recovered as special cases of Theorem \ref{thm:main}: If each vertex of $T$ has at most $d$ children and $bd<1$ then
	\[ \Delta \leq \frac{\sqrt{n}}{1-bd}. \tag{$\dagger$} \]
See Proposition \ref{prop:altDeltabound} for a proof. If $d=1$ then we recover Marton's inequality (her parameter $a$ is equal to $1-b$). If $\nu$ is a product measure then we can take $b = 0$, and we recover McDiarmid's inequality.

Kontorovich (\cite{kontorovich2012}, Theorem 8) has also obtained a Lipschitz function exponential moment bound for Markov measures indexed by trees by bounding what are called $\eta$-mixing coefficients of the process in terms of $b$ and the width of the tree $T$. Earlier work of Kontorovich and Ramanan \cite{kontorovich2008} and (independently) Chazottes, Collet, K\"ulske, and Redig \cite{chazottes2006} showed that concentration is controlled by various norms of a matrix whose entries are the $\eta$-mixing coefficients. These $\eta$-mixing coefficients, however, were defined with linear-time (as opposed to tree-indexed) processes in mind; in order to make sense of $\eta$-mixing coefficients in this context, Kontorovich interprets a tree-indexed process as a linear-time one by fixing a breadth-first ordering of the vertices. In the present paper we control the exponential moments by making more direct use of the tree structure. In Section \ref{sec:comparison} we discuss in more detail the relationship between these results and Theorem \ref{thm:main}, in particular whether they are sufficient to establish Theorem \ref{thm:growthrate}. \\

We use Theorem \ref{thm:main} to study the rate of concentration of sequences of tree-indexed Markov measures. We say that a sequence of metric probability spaces $(X_k, d_k, \mu_k)$ is a \emph{L\'evy family} if for every $\varepsilon>0$ we have
	\[ \sup \left\{ \mu_k \{ f > \varepsilon \} \st f \in \Lip_1(X_k,d_k),\ \int f\, d\mu_k = 0 \right\} \to 0 \quad \text{as } k \to \infty. \]
If $X_k = \scrH^{n_k}$ for some sequence $1 \leq n_1 < n_2 < \cdots$, we say the sequence of metric probability spaces is a \emph{normal} L\'evy family if for each $\varepsilon>0$ there exist postive constants $c_1,c_2$ such that the supremum is bounded above by $c_1 e^{-c_2 n_k \varepsilon^2}$. This terminology is used similarly in Ledoux's book \cite{ledoux2001}; we use a slightly different concentration function and allow for an arbitrary sequence of dimensions $n_k$.


Note that McDiarmid's inequality implies that if $\{\mu_k\}_{k \in \NN}$ is any sequence of product measures with $\mu_k \in \Prob(\scrH^k)$, then the sequence $(\scrH^k, d_k, \mu_k)$ is a normal L\'evy family. 

Theorem \ref{thm:main} has the following consequence for the concentration of a sequence of Markov measures indexed by trees:

\begin{cor}
\label{cor:conc}
	If $\{T_k = (V_k, E_k) \st k \in \NN\}$ is a sequence of finite trees with $b$-Lipschitz Markov measures $\nu_k$ and corresponding $\Delta_k$ then the sequence of metric probability spaces $\{(\scrH^{V_k}, d, \nu_k) \st k \in \NN \}$ (with $d$ the Hamming metric) is a L\'evy family if $\Delta_k = o(\abs{V_k})$ and is a normal L\'evy family if $\Delta_k = O(\sqrt{\abs{V_k}})$.
\end{cor}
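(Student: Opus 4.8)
The plan is to deduce Corollary \ref{cor:conc} directly from Theorem \ref{thm:main} by feeding the tail bound it yields into the definition of a (normal) L\'evy family. Recall that Theorem \ref{thm:main}, via the exponential moment method (Markov's inequality applied to $e^{n\lambda f}$ and optimization over $\lambda>0$), gives for each $k$ and each $f \in \Lip_1(\scrH^{V_k}, d)$ with $\int f\, d\nu_k = 0$ the bound
\[ \nu_k\{ f > \varepsilon \} \leq e^{-2 \abs{V_k}^2 \varepsilon^2 / \Delta_k^2}. \]
Write $n_k = \abs{V_k}$. The right-hand side equals $\exp\!\big( -2 \varepsilon^2 n_k \cdot (n_k / \Delta_k^2) \big)$, so everything hinges on the behavior of the ratio $n_k / \Delta_k^2$.

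First I would treat the normal L\'evy family case. If $\Delta_k = O(\sqrt{n_k})$, then there is a constant $C$ with $\Delta_k^2 \leq C n_k$ for all $k$, hence $n_k / \Delta_k^2 \geq 1/C$, and therefore
\[ \sup\left\{ \nu_k\{ f > \varepsilon\} \st f \in \Lip_1(\scrH^{V_k}, d),\ \int f\, d\nu_k = 0 \right\} \leq e^{-2 \varepsilon^2 n_k / C}. \]
This is exactly a bound of the form $c_1 e^{-c_2 n_k \varepsilon^2}$ with $c_1 = 1$ and $c_2 = 2/C$ (note $c_2$ does not even depend on $\varepsilon$, which is stronger than required), so the sequence is a normal L\'evy family. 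One should note in passing that $n_k \to \infty$ is not assumed but is harmless: the index set for a L\'evy family could have $n_k$ bounded, but then the $T_k$ range over finitely many isomorphism types and the statement is trivial (or, more simply, the displayed bound still tends to $0$ provided $n_k \to \infty$; if $n_k$ does not tend to infinity one can pass to the subsequence where it does, or just observe the definition is about $k \to \infty$ and a bounded-dimension tail that is uniformly separated from $1$ is incompatible with being a L\'evy family unless it is eventually $0$ — in any case the $\Delta_k = O(\sqrt{n_k})$ hypothesis forces the interesting regime). For the L\'evy family case, if $\Delta_k = o(n_k)$ then $n_k / \Delta_k^2 = (n_k/\Delta_k)^2 \cdot (1/n_k) \cdot n_k \to$ ... more directly: $\Delta_k / n_k \to 0$ means $n_k / \Delta_k \to \infty$, but we need $n_k / \Delta_k^2 \cdot n_k = n_k^2/\Delta_k^2 \to \infty$, which is immediate since $n_k^2/\Delta_k^2 = (n_k/\Delta_k)^2 \to \infty$. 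Hence $e^{-2\varepsilon^2 n_k^2/\Delta_k^2} \to 0$ for every fixed $\varepsilon > 0$, which is precisely the L\'evy family condition.

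There is essentially no obstacle here — the corollary is a bookkeeping consequence of Theorem \ref{thm:main}. The only point requiring a sentence of care is that the L\'evy family definition quantifies over all $\varepsilon > 0$ and demands the supremum go to $0$ for each fixed one, while the normal L\'evy definition additionally demands exponential-in-$n_k$ decay with constants that may depend on $\varepsilon$; both are read off immediately from the exponent $-2\varepsilon^2 n_k^2/\Delta_k^2$ once it is rewritten as $-2\varepsilon^2 n_k \cdot (n_k/\Delta_k^2)$. I would therefore present the proof as: (i) quote the tail bound from Theorem \ref{thm:main}; (ii) substitute the two growth hypotheses; (iii) observe the resulting bound is uniform over $f$ and matches the respective definition. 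No coupling, entropy, or tree-structure argument is needed beyond what Theorem \ref{thm:main} already packages.
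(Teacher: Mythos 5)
Your proposal is correct and follows exactly the paper's own route: apply the exponential moment method to Theorem \ref{thm:main} to get the tail bound $e^{-2\abs{V_k}^2\varepsilon^2/\Delta_k^2}$, then read off the two conclusions from the hypotheses $\Delta_k = o(\abs{V_k})$ and $\Delta_k = O(\sqrt{\abs{V_k}})$. The digression about whether $\abs{V_k}\to\infty$ is unnecessary (the exponent $-2\varepsilon^2\abs{V_k}^2/\Delta_k^2$ tends to $-\infty$ directly from $\Delta_k=o(\abs{V_k})$, as you in fact note), but nothing in the argument is wrong.
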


One natural way of producing a sequence of finite trees is to start with an infinite but locally finite rooted tree $T = (V,E)$ and for each $k$ let $T_k$ be the subtree induced by the set of vertices within distance $k$ of the root. The inequality ($\dagger$) shows that if each vertex of $T$ has at most $d$ children then $\Delta_k = O(\sqrt{\abs{V_k}})$ as long as $bd<1$. We show that this asymptotic, and hence being a normal L\'evy family, holds for a wider range of $b$, and replace the degree bound $d$ with more precise measures of the growth rate of $T$ which are defined below (if every vertex of $T$ has exactly $d$ children then all relevant measures are equal to $d$):

\begin{thm}
\label{thm:growthrate}
	If $b<1$ and $T$ has bounded degree then $\Delta_k = o(\abs{V_k})$. 
	
	If $b^2 \maxgr T < 1$ then $\Delta_k = O(\sqrt{\abs{V_k}})$, and if $b^2 \bargr T > 1$ then $\Delta_k \ne O(\sqrt{\abs{V_k}})$.
	
	In particular, if $T$ is subperiodic then $\maxgr T = \bargr T$ so this gives the exact location of a phase transition in the growth rate of $\Delta_k$.
\end{thm}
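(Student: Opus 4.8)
The plan is to deduce all three assertions from estimates on
$S_k := \sum_{w,w'\in V_k} b^{d(w,w')}$, with $d$ the graph metric of $T$. For a rooted tree $S$ with root $o$, write $\gamma_m(S) := \sum_{w\,:\,d(o,w)\le m}b^{d(o,w)}$, and recall from Section~\ref{sec:delta} that $\Delta_k^2 = \sum_{v\in V_k}\gamma_{k-\abs{v}}(T_v)^2$, where $T_v$ is the subtree of $T$ rooted at $v$ and $\abs{v}$ its depth. Expanding the square and, for each pair $w,w'$, summing the terms $b^{d(v,w)+d(v,w')}$ over the common ancestors $v$ (these run along the path from $\rootvert$ to $w\wedge w'$, the one at depth $j$ contributing $b^{d(w,w')+2(\abs{w\wedge w'}-j)}$), the geometric series in $j$ yields $S_k\le\Delta_k^2\le(1-b^2)^{-1}S_k$. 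Hence $\Delta_k=O(\sqrt{\abs{V_k}})$ iff $S_k=O(\abs{V_k})$, and $\Delta_k=o(\abs{V_k})$ iff $\EE[b^{d(W,W')}]\to0$ for $W,W'$ independent and uniform on $V_k$, since $S_k=\abs{V_k}^2\,\EE[b^{d(W,W')}]$.

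For the first assertion, let $D$ bound the degree of $T$; then $\abs{V_k}\to\infty$ and any radius-$R$ ball meets at most $D^{R+1}$ vertices, so $\Prob[d(W,W')\le R]\le D^{R+1}/\abs{V_k}$ and $\EE[b^{d(W,W')}]\le b^R+D^{R+1}/\abs{V_k}$; sending $k\to\infty$ then $R\to\infty$ gives $\Delta_k=o(\abs{V_k})$. For the lower bound in the second assertion, suppose $b^2\bargr T>1$; since $\bargr T\ge1$ this forces $b^2(\bargr T)^2>\bargr T$, so one may pick $\theta<\bargr T$ and $\varepsilon>0$ with $(b\theta)^2>\bargr T+\varepsilon$. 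By definition of $\bargr T=\limsup_n\abs{T_n}^{1/n}$ ($T_n$ the $n$th level) there are $n_i\to\infty$ with $\abs{T_{n_i}}\ge\theta^{n_i}$, whereas $\abs{V_n}\le C(\bargr T+\varepsilon)^n$ for all $n$. Two level-$n_i$ vertices are within distance $2n_i$, so $S_{n_i}\ge\abs{T_{n_i}}^2 b^{2n_i}\ge(b\theta)^{2n_i}$, whence $\Delta_{n_i}^2/\abs{V_{n_i}}\ge S_{n_i}/\abs{V_{n_i}}\ge C^{-1}\big((b\theta)^2/(\bargr T+\varepsilon)\big)^{n_i}\to\infty$ and $\Delta_k\ne O(\sqrt{\abs{V_k}})$.

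The technical heart is the upper bound in the second assertion: $b^2\maxgr T<1$ should force $\Phi_k(T):=\Delta_k^2=O(\abs{V_k})$. I would use the recursion over subtrees: if $S$ has child subtrees $S^{(1)},\dots,S^{(r)}$ then, with $\Phi_k(S):=\sum_v\gamma_{k-\abs{v}}(S_v)^2$ and $\abs{V_k(S)}$ the number of vertices of $S$ within distance $k$ of its root,
\[ \gamma_k(S)=1+b\sum_i\gamma_{k-1}(S^{(i)}),\qquad \Phi_k(S)=\gamma_k(S)^2+\sum_i\Phi_{k-1}(S^{(i)}),\qquad \abs{V_k(S)}=1+\sum_i\abs{V_{k-1}(S^{(i)})}. \]
Choosing $\varepsilon>0$ with $\beta:=b^2(\maxgr T+\varepsilon)<1$ and using the uniform bound $Z_\ell(v)\le C_\varepsilon(\maxgr T+\varepsilon)^\ell$ on the number $Z_\ell(v)$ of vertices at distance $\ell$ below $v$ (valid since $\maxgr T$ is the Fekete limit of $(\sup_v Z_\ell(v))^{1/\ell}$ and is finite), Cauchy--Schwarz gives $\gamma_k(S)^2\le\abs{V_k(S)}\sum_{\ell\ge0}Z_\ell(\mathrm{root}(S))b^{2\ell}\le A\abs{V_k(S)}$ with $A:=C_\varepsilon/(1-\beta)$ --- it is here that $b^2\maxgr T<1$, rather than merely $b\maxgr T<1$, is essential. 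I expect the main obstacle to be closing the $\Phi_k$-recursion without the spurious factor of order $k$ produced by naive iteration of $\Phi_k(S)\le A\abs{V_k(S)}+\sum_i\Phi_{k-1}(S^{(i)})$; the plan is to induct on a strengthened hypothesis $\Phi_k(S)\le A'\abs{V_k(S)}+(\text{a correction built from }\gamma_k(S)^2)$, with constants tuned so the step is self-improving, using $\gamma_k(S)^2\ge b^2\sum_i\gamma_{k-1}(S^{(i)})^2$ (from $(\sum_i x_i)^2\ge\sum_i x_i^2$) to telescope the corrections; the regular-tree case, where the recursion solves explicitly and $\Phi_k=O(\abs{V_k})$ precisely when $b^2d<1$, guides the choice. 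Given $\Phi_k(T)=O(\abs{V_k})$, the conclusion follows.

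Finally, for a subperiodic tree $\gr T$ exists and $\maxgr T=\gr T=\bargr T$: one always has $\maxgr T\ge\bargr T$ since $\sup_v Z_\ell(v)\ge\abs{T_\ell}$, and subperiodicity embeds every $T_v$ into a subtree rooted within bounded depth $N$, which bounds $\sup_v Z_\ell(v)$ by a constant times $\max_{m\le\ell+N}\abs{T_m}$ and forces the reverse inequality. Thus the hypotheses $b^2\maxgr T<1$ and $b^2\bargr T>1$ of the middle assertion become $b^2\gr T<1$ and $b^2\gr T>1$, and $\Delta_k$ undergoes a sharp transition in growth rate at $b^2\gr T=1$.
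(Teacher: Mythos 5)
Your first assertion, your lower bound (via pairs of vertices in a single deep level, which is a valid variant of the paper's argument using the diameter bound $d(u,v)\le 2k$ together with Proposition \ref{prop:altgrowthformula}), your sandwich $S_k\le\Delta_k^2\le(1-b^2)^{-1}S_k$ (this is exactly Proposition \ref{prop:Deltabound}), and your subperiodicity argument are all correct. But the central claim --- $b^2\maxgr T<1$ implies $\Delta_k=O(\sqrt{\abs{V_k}})$ --- is not proved; you explicitly leave it as a plan, and the plan as described cannot close. Your Cauchy--Schwarz step $\gamma_k(S)^2\le\abs{V_k(S)}\sum_\ell Z_\ell b^{2\ell}\le A\abs{V_k(S)}$ is fine, but summing it over all $v\in V_k$ gives $\Delta_k^2\le A\sum_{w\in V_k}(d(\rootvert,w)+1)$, which is genuinely of order $k\abs{V_k}$ (most vertices sit near depth $k$), so the factor of $k$ you worry about is real. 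The proposed repair fails for a structural reason: with the hypothesis $\Phi_k(S)\le A'\abs{V_k(S)}+c\,\gamma_k(S)^2$, the inductive step replaces $c\sum_i\gamma_{k-1}(S^{(i)})^2$ by at best $(c/b^2)\gamma_k(S)^2$ using your inequality $\gamma_k(S)^2\ge b^2\sum_i\gamma_{k-1}(S^{(i)})^2$, so the coefficient of the correction term is multiplied by $b^{-2}>1$ at each level rather than contracted; closing the induction would then require $\gamma_k(S)$ to be uniformly bounded, which holds only when $b\,\maxgr T<1$ --- i.e., it misses precisely the interesting regime $(\maxgr T)^{-1}\le b<(\maxgr T)^{-1/2}$ that the theorem is about.

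The mechanism that actually produces the exponent $2$ in $b^2\maxgr T<1$ is an $\ell^2$ operator-norm computation, not Cauchy--Schwarz on the generating function. Writing $Qf(v)=\sum_{\pi(w)=v}f(w)$, Lemma \ref{lem:deltaQ} gives $\Delta_k=\norm[\big]{\sum_{j=0}^k(bQ)^j\1_{V_k}}_2$, hence $\Delta_k/\sqrt{\abs{V_k}}\le\sum_{j\ge0}b^j\norm{Q^j}$, and the point is Proposition \ref{prop:Qopnorm}: $\norm{Q^j}=\sqrt{\max_v\abs{D_j(v)}}$ (computed via the adjoint $Q^*f(w)=f(\pi(w))$, whose $j$-th power redistributes mass to descendants and picks up the factor $\abs{D_j(v)}$ inside a square root). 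Thus $\limsup_j(b^j\norm{Q^j})^{1/j}=\sqrt{b^2\maxgr T}<1$ and the series converges. I recommend replacing your $\Phi_k$-recursion by this argument; everything else in your write-up can stand.
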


A definition of subperiodicity is given below; see also \cite{lyons2016}. Every regular tree is subperiodic. We also note that if $T$ is subperiodic then $\bargr T$ is equal to the branching factor $\br T$, which determines phase transitions related to percolation and random walks \cite{lyons1990}, reconstruction for the binary symmetric channel \cite{evans2000,peres2003}, and uniqueness and extremality of the free boundary Gibbs state for the Ising model (see Section 2.2 of \cite{evans2000} for a brief survey). In general, the branching factor is bounded above by both $\bargr$ and $\maxgr$.

In the final section we turn to examining the special case of the Ising model. Using this example we show that Theorem \ref{thm:main} is close to sharp in the following sense:

\begin{thm}
\label{thm:optimality}
	Let $T = (V,E)$ be a finite tree with $n$ vertices and $b \in [0,1)$, and suppose $C = C(T,b) \in \RR$ is such that for all metric spaces $\scrH$ of diameter at most 1 and for each $b$-Lipschitz $\scrH$-valued Markov measure $\nu$ indexed by $T$ we have
		\[ \int e^{n \lambda f}\, d\nu \leq e^{C^2 \lambda^2 / 8} \]
	for all 1-Lipschitz $f \colon \scrH^V \to \RR$ with mean 0, or equivalently
		\[ \bar{d}(\mu,\nu) \leq \frac{C}{n} \sqrt{\frac{1}{2}D (\mu \| \nu)} \]
	for all $\nu \in \Prob(\scrH^V)$. Then
		\[ C \geq \Delta \frac{\sqrt{1-b^2}}{2} . \]
\end{thm}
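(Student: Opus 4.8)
The plan is to exhibit, for the given tree $T$ and contractivity parameter $b$, a specific family of Ising-type broadcast models together with a specific $1$-Lipschitz test function for which the exponential-moment bound forces $C$ to be at least $\Delta\sqrt{1-b^2}/2$. The guiding principle is that $\Delta$ is built (in Section \ref{sec:delta}) from the tree and $b$ in a way that should track the variance — or more precisely the sub-Gaussian width — of a carefully chosen linear statistic of the Ising model, so the hypothesized bound $\int e^{n\lambda f}\,d\nu \le e^{C^2\lambda^2/8}$ applied to that statistic directly yields a lower bound on $C$. Concretely, I would take $\scrH = \{0,1\}$ with the metric of diameter $1$ (so $d(0,1)=1$), let $\nu$ be the Ising broadcast model on $T$ with flip probability $p$ chosen so that the edge correlation is $1-2p$ and the Lipschitz constant of the kernels equals $b$ (the Ising section should establish $b = 1-2p$, i.e.\ $p = (1-b)/2$), and start the root from the uniform distribution on $\{0,1\}$ so every $X_v$ is marginally uniform.

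The key steps, in order: (1) Recall from Section \ref{sec:delta} the definition of $\Delta = \Delta(T,b)$; I expect it to be expressible as a norm of a matrix or vector whose entries are geometric-type sums $\sum b^{(\text{something})}$ over paths in $T$, essentially $\Delta^2 = \sum_v w_v^2$ or $\|\mathbf{1} M\|$ for an explicit $M$. (2) Consider the centered linear statistic $f(x) = \frac{1}{n}\sum_{v\in V} c_v\,\epsilon_v$ where $\epsilon_v = 2x_v - 1 \in \{-1,+1\}$ and the coefficients $c_v$ are chosen to make $f$ exactly $1$-Lipschitz for the normalized Hamming metric — since flipping one coordinate changes $f$ by $|c_v|/n \cdot 2 \cdot \tfrac{1}{?}$, the $1$-Lipschitz condition pins down $\max_v |c_v|$, and the natural normalization will be $|c_v| \le 1$ with the optimal choice of signs $c_v = \pm 1$ dictated by the sign pattern that aligns with the correlations. (3) Compute $\Var_\nu(n f) = \sum_{u,v} c_u c_v\,\mathbb{E}[\epsilon_u \epsilon_v]$; for the Ising model on a tree, $\mathbb{E}[\epsilon_u\epsilon_v] = (1-2p)^{d(u,v)} = b^{d(u,v)}$ where $d(u,v)$ is the tree distance, so with signs chosen appropriately this variance is a positive-definite quadratic form in the $c_v$'s, and choosing the $c_v$'s to (nearly) maximize it should reproduce (a constant multiple of) $\Delta^2(1-b^2)/4$. (4) Feed this into the hypothesized bound: expanding $\int e^{n\lambda f}\,d\nu \le e^{C^2\lambda^2/8}$ to second order in $\lambda$ near $0$ gives $\tfrac{1}{2}\Var_\nu(nf)\,\lambda^2 + o(\lambda^2) \le \tfrac{C^2\lambda^2}{8}$, hence $C^2 \ge 4\Var_\nu(nf)$, and combining with step (3) gives $C \ge \Delta\sqrt{1-b^2}/2$.

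The main obstacle I anticipate is step (3): correctly evaluating the quadratic form $\sum_{u,v} c_u c_v\, b^{d(u,v)}$ on a general (non-regular, possibly not subperiodic) tree and matching it to the definition of $\Delta$. The matrix $(b^{d(u,v)})_{u,v}$ on a tree has nice structure — it factors through the root via the identity $b^{d(u,v)} = b^{d(u,w)+d(w,v)}$ when $w$ is on the geodesic, and in particular $b^{d(u,v)} = b^{\mathrm{depth}(u)+\mathrm{depth}(v) - 2\,\mathrm{depth}(u\wedge v)}$ where $u\wedge v$ is the deepest common ancestor — so there should be a clean way to diagonalize or at least lower-bound the form. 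I would likely not need the exact maximizer: it suffices to choose $c_v$ matching whatever weights define $\Delta$ (plausibly $c_v$ proportional to a row of the matrix that realizes $\Delta$, or simply all $c_v$ equal to the optimal constant sign) and verify the resulting variance is at least $\Delta^2(1-b^2)/4$, possibly with a short argument handling the factor $1-b^2$ as coming from a "renewal" decomposition $\sum_{k\ge 0} b^{2k}(1-b^2) = 1$ that appears when one expands the geometric weights defining $\Delta$. A secondary subtlety is making sure the test function is genuinely $1$-Lipschitz with the stated normalization of the Hamming metric; this is a routine check once the coefficient scaling is fixed, but it determines the constant and so must be done carefully.
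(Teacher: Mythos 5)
Your proposal is correct and follows essentially the same route as the paper: the paper likewise takes the Ising model with $b = 1-2p$ and uniform root, uses the magnetization (density of ones) as the $1$-Lipschitz test function (your linear statistic with $c_v \equiv 1/2$, which is the correct normalization for $\epsilon_v \in \{-1,+1\}$), computes $\Var(nf) = \tfrac{1}{4}\sum_{(u,v)\in V^2} b^{d(u,v)}$ from the conditional independence of spins given their common ancestor, and lower-bounds this by $\tfrac{1-b^2}{4}\Delta^2$ via exactly the geometric-series resummation over common ancestors that you anticipate (this is the upper bound in Proposition~\ref{prop:Deltabound}). The only divergence is the final extraction step: the paper integrates the tail bound to get $\Var(nf) \leq C^2$, whereas your Taylor expansion of the moment generating function at $\lambda = 0$ gives the sharper $\Var(nf) \leq C^2/4$ and hence would actually yield the stronger conclusion $C \geq \Delta\sqrt{1-b^2}$.
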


In general, Theorem \ref{thm:growthrate} only gives conditions for whether analysis of the growth rate of $\Delta_k$ can or cannot establish that a sequence is a (normal) L\'evy family; it is possible for a sequence of measures to be a normal L\'evy family even if $\Delta_k \ne O(\sqrt{\abs{V_k}})$. The problem is that $\Delta_k$ depends on the Lipschitz constants of the transition kernels only through their maximum, which can be affected by a single atypical kernel. We show that in the case of the Ising model with uniform transition probabilities, where the maximum is much more representative of the overall behavior of the process, a phase transition actually occurs in the quality of concentration:

\begin{thm}
\label{thm:bsctransition}
	For $p \in (0,1/2]$, the sequence of depth-$k$ marginals of the Ising model with transition probability $p$ on an infinite tree $T$ is not a normal L\'evy family if $\Delta_k \ne O(\sqrt{\abs{V_k}})$.
	
	In particular, it is not a normal L\'evy family if $b^2 \bargr T > 1$, so that if $T$ is such that $\maxgr T = \bargr T$ then we have a phase transition at this location.
\end{thm}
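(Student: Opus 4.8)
The plan is to exhibit, for each $k$ in a subsequence, a mean-zero $1$-Lipschitz function $f_k\colon\scrH^{V_k}\to\RR$ whose upper tail at a fixed level decays too slowly for the normal L\'evy property. Because $p$ is uniform and the root is uniform (Section~\ref{sec:ising}), every transition kernel has Lipschitz constant $b=1-2p$, every one-dimensional marginal of $\nu_k$ is uniform on $\{0,1\}$, and $\mathrm{Cov}_{\nu_k}(X_u,X_v)=\tfrac14 b^{d(u,v)}$, where $d(u,v)$ is the graph distance in $T$. I take $f_k$ to be a weighted magnetization $f_k(x)=\abs{V_k}^{-1}\sum_{v\in V_k}c_v\,(x_v-\tfrac12)$ with $\abs{c_v}\le 1$; any such $f_k$ is automatically mean zero and $1$-Lipschitz for the normalized Hamming metric, with $\Var_{\nu_k}(f_k)=\tfrac{1}{4}\abs{V_k}^{-2}\sum_{u,v}c_uc_v\,b^{d(u,v)}$. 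The first step is to note that, by the definition of $\Delta$ in Section~\ref{sec:delta} together with Theorem~\ref{thm:optimality}, for the uniform Ising model $\Delta_k^2$ is comparable (with constants depending only on $b$) to $\abs{V_k}^2$ times the supremum of $\Var_{\nu_k}(f_k)$ over weighted magnetizations as above; hence $\Delta_k\ne O(\sqrt{\abs{V_k}})$ forces, along a subsequence, a choice of weights $c^{(k)}$ with $\abs{V_k}\,\Var_{\nu_k}(f_k)\to\infty$. Uniformity of $p$ is essential here: for a general tree-indexed Markov measure $\Delta_k$ can be inflated by a single atypical kernel with no honest linear statistic of comparable variance, whereas here it faithfully reflects the covariance structure.

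Write $\sigma_k^2=\Var_{\nu_k}(f_k)$, so $\abs{V_k}\sigma_k^2\to\infty$ along the subsequence; we may assume $\sigma_k\to 0$, since otherwise $\abs{f_k}\le1$ and the spin-flip symmetry of $\nu_k$ already give $\nu_k\{f_k>\varepsilon\}\ge c>0$ for a small fixed $\varepsilon$, contradicting even the L\'evy property. The second step is the lower bound
\[
\nu_k\{f_k>\varepsilon\}\ \ge\ \exp\!\Big(-\big(1+o(1)\big)\,\tfrac{\varepsilon^2}{2\sigma_k^2}\Big)
\]
for each fixed $\varepsilon>0$, along the subsequence. Granting this, the exponent is $o(\abs{V_k})$ because $1/\sigma_k^2=o(\abs{V_k})$, so for any $c_1,c_2>0$ the right side eventually exceeds $c_1 e^{-c_2\abs{V_k}\varepsilon^2}$; thus at this $\varepsilon$ the supremum in the definition of a L\'evy family is not bounded by $c_1 e^{-c_2\abs{V_k}\varepsilon^2}$ for all large $k$, and the sequence of depth-$k$ marginals is not a normal L\'evy family. (Theorem~\ref{thm:main} gives the matching upper bound $\nu_k\{f_k>\varepsilon\}\le e^{-2\abs{V_k}^2\varepsilon^2/\Delta_k^2}=e^{-o(\abs{V_k})}$, so this rate is the expected one.) For the last assertion: $b^2\bargr T>1$ implies $\Delta_k\ne O(\sqrt{\abs{V_k}})$ by Theorem~\ref{thm:growthrate}, so the sequence is not a normal L\'evy family; and if moreover $\maxgr T=\bargr T$, then by Theorem~\ref{thm:growthrate} and Corollary~\ref{cor:conc} the sequence is a normal L\'evy family when $b^2\bargr T<1$, so $b^2\bargr T=1$ is a genuine phase transition.

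I expect the displayed tail bound to be the main obstacle, as the fixed level $\varepsilon$ lies at $\varepsilon/\sigma_k\to\infty$ standard deviations of $f_k$: this is a Cram\'er-type large-deviation lower bound for a linear statistic whose own fluctuation scale $\sigma_k$ degenerates. I would prove it by a change of measure: tilt $\nu_k$ to $\nu_k^\lambda$ with $d\nu_k^\lambda\propto e^{\abs{V_k}\lambda f_k}\,d\nu_k$, and choose $\lambda=\lambda_k$ so that $\EE_{\nu_k^\lambda}f_k=\varepsilon$; then, with $\Lambda_k(\lambda)=\log\int e^{\abs{V_k}\lambda f_k}\,d\nu_k$ and a small window parameter $\delta$,
\[
\nu_k\{f_k>\varepsilon\}\ \ge\ e^{-\abs{V_k}\lambda_k(\varepsilon+\delta)+\Lambda_k(\lambda_k)}\;\nu_k^\lambda\{f_k\in[\varepsilon,\varepsilon+\delta]\}.
\]
Because $f_k$ is linear in the spins and $\nu_k$ is a Markov random field on $T$, both $\Lambda_k$ and the covariances of $f_k$ under $\nu_k^\lambda$ obey a leaves-to-root recursion with one factor per edge, each a perturbation of the identity of size controlled by $b$ and by the per-site tilt. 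The optimal tilt satisfies $\lambda_k\sim\varepsilon/(\abs{V_k}\sigma_k^2)\to 0$ (using $\abs{V_k}\sigma_k^2\to\infty$), so the tilts are small, and the work is to push the recursion through to show that the variance of $f_k$ under $\nu_k^\lambda$ stays comparable to $\sigma_k^2$ for $\lambda\in[0,\lambda_k]$ --- equivalently $\Lambda_k(\lambda)=\tfrac12\abs{V_k}^2\lambda^2\sigma_k^2(1+o(1))$ on that range --- whence $\abs{V_k}\lambda_k(\varepsilon+\delta)-\Lambda_k(\lambda_k)=\tfrac{\varepsilon^2}{2\sigma_k^2}(1+o(1))$ for $\delta\to 0$ slowly, and a local limit estimate bounds the window probability below by a merely polynomial factor. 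Controlling this recursion uniformly in $k$ is where I expect the real effort to lie.
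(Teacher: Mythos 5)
Your first step already contains everything needed, but you then head in the wrong direction. Having observed that $\Delta_k^2$ is comparable (up to constants depending on $b$, via Proposition \ref{prop:Deltabound}) to $\abs{V_k}^2$ times the variance of the magnetization, the efficient move is the \emph{contrapositive}: if the sequence were a normal L\'evy family, then integrating the tail bound $\nu_k\{\abs{f}>t\}\le c_1e^{-c_2\abs{V_k}t^2}$ over $t$ gives $\Var_{\nu_k}f\le 2c_1/(c_2\abs{V_k})$ for every mean-zero $1$-Lipschitz $f$; applied to the plain density-of-ones function (no weights $c_v$ are needed), whose variance equals $\frac{1}{4\abs{V_k}^2}\sum_{(u,v)}b^{d(u,v)}\ge\frac{1-b^2}{4\abs{V_k}^2}\Delta_k^2$, this forces $\Delta_k^2=O(\abs{V_k})$. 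That three-line argument is the paper's proof. Instead you commit to proving the direct implication by exhibiting a function with a heavy tail at a \emph{fixed} level $\varepsilon$, which converts an easy second-moment statement into a hard large-deviations problem.

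That choice leaves a genuine gap: the displayed lower bound $\nu_k\{f_k>\varepsilon\}\ge\exp(-(1+o(1))\varepsilon^2/2\sigma_k^2)$ is asserted but not proven, and it is the entire content of your argument. None of the three ingredients of the tilting scheme is established: the uniform quadratic expansion $\Lambda_k(\lambda)=\tfrac12\abs{V_k}^2\lambda^2\sigma_k^2(1+o(1))$ on $[0,\lambda_k]$ is a nontrivial claim about the tilted Ising measure precisely in the regime $\abs{V_k}\sigma_k^2\to\infty$ where the model is near or above criticality and susceptibilities can blow up under small tilts; the anti-concentration estimate $\nu_k^\lambda\{f_k\in[\varepsilon,\varepsilon+\delta]\}\ge\mathrm{poly}^{-1}$ is likewise unproven; and the Gaussian form of the rate at a fixed (order-one) level $\varepsilon$, rather than at a moderate-deviation scale $\varepsilon_k\to0$, is doubtful in general --- for bounded spins the rate function need not remain quadratic out to constant $\varepsilon$. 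You correctly note that only $\nu_k\{f_k>\varepsilon\}\ge e^{-o(\abs{V_k})}$ is required, but even that weaker bound is not derived, and you explicitly defer the ``real effort.'' As written, the proof is incomplete; the fix is not to complete the recursion but to abandon the direct route in favor of the contrapositive variance argument your own setup supplies. Your deduction of the second paragraph of the theorem from Theorem \ref{thm:growthrate} and Corollary \ref{cor:conc} is fine.
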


The regularity of the tree $T$ does not affect the existence of this transition, only our present ability to state its location in terms of natural quantities; see the definition of $\G(T)$ and subsequent discussion in Section \ref{subsec:refine} below.

Some numerical evidence suggests that the phase transition may occur at $b^2 \maxgr T = 1$. In Figure \ref{fig:31growthrate} we plot $\Delta_k^2/\abs{V_k}$ as a function of $k$ for various values of $b$, with $T$ the ``3-1 tree'' defined in Section \ref{subsec:trees} and pictured in Figure \ref{fig:31tree}. It seems that $\Delta_k^2/\abs{V_k}$ is concave down for $b < \frac{1}{\sqrt{\maxgr T}} = \frac{1}{\sqrt{3}}$ and concave up for larger $b$. For comparison, we also include the same plot with $T$ the binary tree, where a transition is known to occur at $b = 1/\sqrt{2}$. The lack of symmetry in the 3-1 tree makes $\Delta_k$ much more difficult to calculate efficiently compared to the binary tree; this is why the depth only goes up to 25. The images were produced using Matplotlib \cite{hunter2007}, and calculations for the 3-1 tree were done in part using NumPy \cite{vanderwalt2011}.
\begin{figure}
\centering
\includegraphics[width=0.48\textwidth]{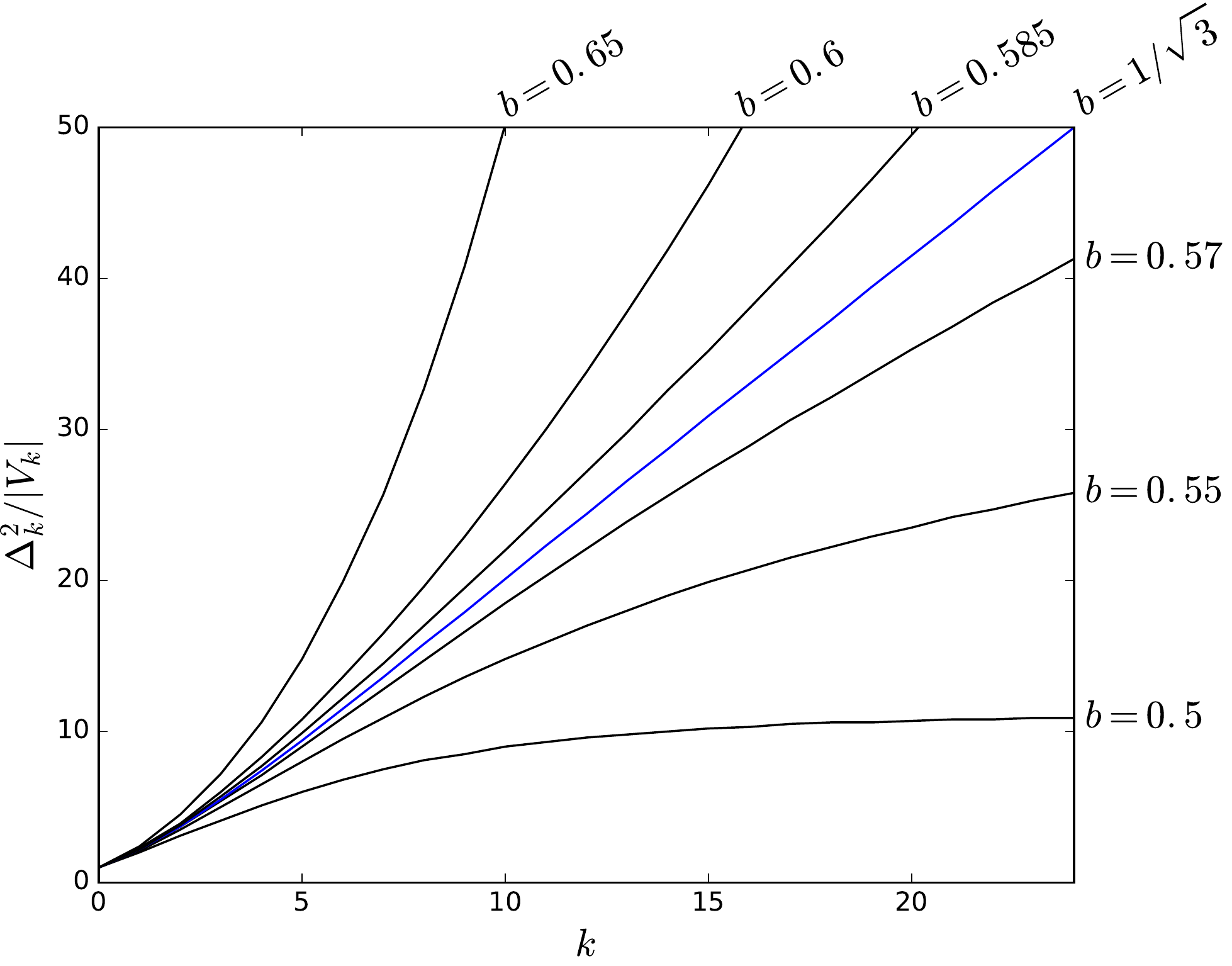}
\quad
\includegraphics[width=0.48\textwidth]{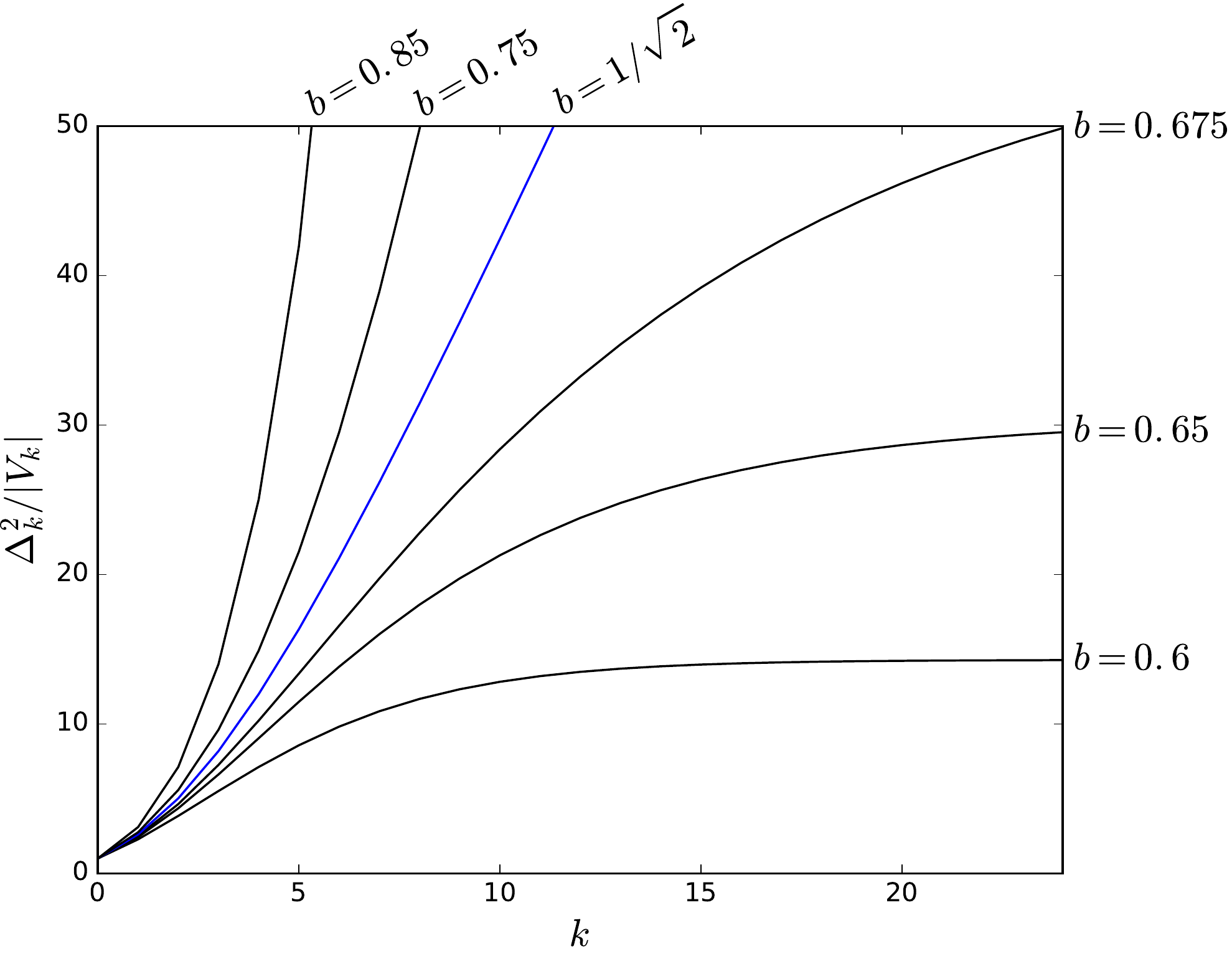}
\caption{Comparison of growth rate of $\Delta_k$ for different values of $b$, where on the left $T$ is the ``3-1 tree'' defined in Section \ref{subsec:trees} and pictured in Figure \ref{fig:31tree}, and on the right $T$ is the binary tree. The conjectured critical value on the left is $b = 1/\sqrt{3}$, and the known critical value on the right is $b = 1/\sqrt{2}$.}
\label{fig:31growthrate}
\end{figure}

\subsection{Directions for further work}
\subsubsection{Refinement of Theorem \ref{thm:growthrate}}
\label{subsec:refine}
Theorem \ref{thm:growthrate} is inconclusive for $b$ in the interval $\big[ (\maxgr T)^{-1/2}, (\bargr T)^{-1/2} \big]$, which has positive length for general trees.

One way to resolve this is to define a measure of tree growth specifically designed to determine whether $\Delta_k$ has the desired growth rate: Given an infinite tree $T$, define $\G(T)$ by the formula
	\[ \big(\G(T)\big)^{-1/2} = \sup \left\{ b \in \RR \st \limsup_{k \to \infty} \frac{1}{\abs{V_k}} \sum_{(v,w) \in V_k^2} b^{d(v,w)} < \infty \right\} \]
or equivalently
	\[ \G(T) = \inf \left\{ \lambda > 0 \st \limsup_{k \to \infty} \frac{1}{\abs{V_k}} \sum_{(v,w) \in V_k^2} \lambda^{-d(v,w)/2} < \infty \right\} . \]
This takes a similar form to the formulas
	\[ \br T = \inf \left\{ \lambda > 0 \st \liminf_{\Pi \to \infty} \sum_{v \in \Pi} \lambda^{-d(\rootvert, v)} < \infty \right\} \]
(where $\Pi$ are `cutsets'; see \cite{lyons1990}) and
	\[ \bargr T = \inf \left\{ \lambda > 0 \st \limsup_{k \to \infty} \sum_{v \in L_k} \lambda^{-d(\rootvert,v)} < \infty \right\} \]
(where $L_k$ is the set of vertices at distance $k$ from the root $\rootvert$; this is equivalent to the definition given in Section \ref{subsec:trees}).
By Proposition \ref{prop:Deltabound}, $\Delta_k = O(\sqrt{V_k})$ if $b^2 \G(T) < 1$ and $\Delta_k \ne O(\sqrt{V_k})$ if $b^2 \G(T) > 1$. Therefore Theorem \ref{thm:growthrate} implies
	\[ \bargr T \leq \G(T) \leq \maxgr T. \]
In particular, $\G(T) = d$ if $T$ is the $d$-ary tree, and $\G(T) \leq d$ if all vertices have at most $d$ children.

Based on the preceding comparisons it seems reasonable to interpret $\G(T)$ as a measure of the growth rate of $T$. However, compared to $\maxgr T$, which is both the spectral radius of the adjacency matrix of $T$ and a slight variant of $\bargr T$, the definition of $\G(T)$ is less natural; for this reason we have chosen not to express the main results of this paper in terms of $\G(T)$.

Is it true that $\G(T) = \maxgr T$? Figure \ref{fig:31growthrate} suggests that this may be true for at least one tree with $\bargr T \ne \maxgr T$.

\subsubsection{Generalizations of Theorem \ref{thm:main}}
The method used to prove Theorem \ref{thm:main} may also establish concentration for Bayesian networks, which are like broadcast models with multiple root/source nodes.

A similar approach may also yield concentration for the marginal on the leaf nodes.

\subsection{Overview}
Section 2 contains definitions and auxiliary results used to proved the main theorems. In Section 3 we prove the main result on concentration of Lipschitz functions, and in Section 4 we prove the conditions stated in Theorem \ref{thm:growthrate} which guarantee concentration of a sequence of tree-indexed Markov measures. In Section 5 we restrict to the special case of the Ising model, and use it to prove that Theorem \ref{thm:main} is almost sharp. We also use the Ising model to compare our results to related work in Section 6.

\subsection{Acknowledgements}
This material is based upon work supported by the National Science Foundation under Grant No.~DMS 1344970.

The author would like to thank Tim Austin and Georg Menz for many helpful discussions and feedback on earlier versions of the paper.

\section{Definitions and Lemmas}

\subsection{Tree notation}
\label{subsec:trees}
We write $T = (V,E)$ to mean that $T$ is a rooted tree with vertex set $V$ and edge set $E$. The root vertex is denoted by $\rootvert$. We always assume $T$ to be \emph{locally finite}, i.e. each vertex has finite degree. We consider $V$ to be a metric space, with the distance between two vertices given by the number of edges in the unique simple path between them.

It will often be useful to endow $V$ with the following natural partial order: we say $v \leq w$ if $v$ lies on the path from the root to $w$ (including $v = \rootvert$ or $v = w$). In this situation we also say that $v$ is an ancestor of $w$ or that $w$ is a descendant of $v$. Then every pair of vertices has a well-defined meet $v \wedge w$, which is the unique maximal vertex which is an ancestor of both $v$ and $w$. The vertex $v \wedge w$ can also be characterized as the place where the paths from $\rootvert$ to $v$ and from $\rootvert$ to $w$ diverge.

The \emph{parent} of a vertex $v$ is the unique maximal ancestor of $v$ which is not equal to $v$. We denote this vertex by $\parent{v}$; note that the root is the only vertex with no parent. The \emph{children} of $v$ are those vertices in the set $\pi^{-1}(v) = \{ w \in V \st \pi(w) = v\}$. A vertex with no children is called a \emph{leaf}.

We denote the set of descendants in the $r$th generation after $v$ by
	\[ D_r(v) \coloneqq \{ w \in V \st v \leq w,\ d(v,w) = r\} = (\pi^r)^{-1}(v). \]
The upper growth rate and the maximum local growth rate of $T$ are defined by
	\[ \bargr T \coloneqq \limsup_{r \to \infty} \abs{D_r(\rootvert)}^{1/r} \]
and
	\[ \maxgr T \coloneqq \limsup_{r \to \infty} \max_{v \in V} \abs{D_r(v)}^{1/r} . \]
The notion of upper growth rate is well-known; see for example \cite{lyons2016} Section 3.3.

Note that $\bargr T \leq \maxgr T$. To see that the inequality may be strict, consider the tree (taken from \cite{lyons1990}) defined as follows: for each $k \geq 0$ the $k$th level $D_k(\rootvert)$ consists of $2^k$ vertices. Both vertices of $D_1(\rootvert)$ are children of $\rootvert$. Choose an ordering for each level, and for $k>0$ and $1 \leq j \leq 2^{k-1}$ take the set of children of the $j$th vertex in $D_k(\rootvert)$ to be the $j$th group of 3 vertices of $D_{k+1}(\rootvert)$. The remaining $2^{k-1}$ vertices of $D_k(\rootvert)$ each get one child, chosen in order from the remaining $2^{k-1}$ vertices of $D_{k+1}(\rootvert)$. See Figure \ref{fig:31tree}. Clearly $\bargr T = 2$, but $T$ contains arbitrarily deep 3-ary subtrees so $\maxgr T = 3$.
\begin{figure}
\begin{center}
\begin{tikzpicture}[
		level 1/.style={sibling distance=7em}, 
		level 2/.style={sibling distance=5em},
		level 3/.style={sibling distance=1.6em},
		level 4/.style={sibling distance=0.5em},
		level 5/.style={sibling distance=0.15em}, level distance = 0.75cm
	]
	\coordinate (root) {} [fill] circle (1.5pt)
		child { [fill] circle (1pt)
			child { [fill] circle (1pt)
				child { [fill] circle (1pt)
					child { [fill] circle (1pt) child child child }
					child { [fill] circle (1pt) child child child }
					child { [fill] circle (1pt) child child child }
					}
				child {
					[fill] circle (1pt)
					child { [fill] circle (1pt) child child child }
					child { [fill] circle (1pt) child child child }
					child { [fill] circle (1pt) child child child }
					}
				child {
					[fill] circle (1pt)
					child { [fill] circle (1pt) child child child }
					child { [fill] circle (1pt) child child child }
					child { [fill] circle (1pt) child }
					}
				}
			child { [fill] circle (1pt)
				child { [fill] circle (1pt)
					child { [fill] circle (1pt) child }
					child { [fill] circle (1pt) child }
					child { [fill] circle (1pt) child }
					}
				child { [fill] circle (1pt)
					child { [fill] circle (1pt) child }
					}
				child { [fill] circle (1pt)
					child { [fill] circle (1pt) child }
					}
				}
			child { [fill] circle (1pt)
				child { [fill] circle (1pt)
					child { [fill] circle (1pt) child }
					}
				}
			}
		child { [fill] circle (1pt)
			child { [fill] circle (1pt)
				child { [fill] circle (1pt)
					child { [fill] circle (1pt) child }
					}
				}
			}
	;
\end{tikzpicture}
\caption{The first few levels of the 3-1 tree, which satisfies $\bargr T = 2$ and $\maxgr T = 3$.}
\label{fig:31tree}
\end{center}
\end{figure}
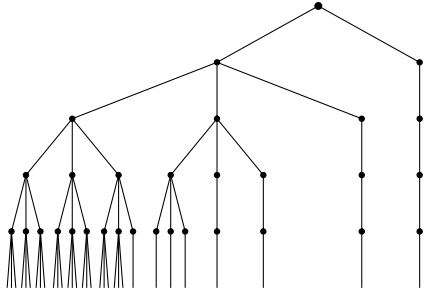

The following equivalent characterization of $\bargr T$ is sometimes more convenient than the definition above:

\begin{prop}
\label{prop:altgrowthformula}
For any tree $T$,
	\[ \bargr T = \limsup_{r \to \infty} \abs{B_r(\rootvert)}^{1/r}, \]
where $B_r(\rootvert)$ is the closed ball of radius $r$ around the root.
\end{prop}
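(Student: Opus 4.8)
The plan is to deduce the identity from the elementary fact that a finite geometric series with common ratio greater than $1$ is comparable to its last term, together with the observation that $\bargr T \ge 1$. I will use throughout that $T$ is infinite and locally finite: this makes every ball $B_r(\rootvert)$ finite, and it forces $D_r(\rootvert) \ne \emptyset$ for every $r$ (there is an infinite ray out of the root by K\"onig's lemma, or directly: if some $D_r(\rootvert)$ were empty then $V = B_{r-1}(\rootvert)$ would be finite). Consequently $\abs{D_r(\rootvert)} \ge 1$ for all $r$, and hence $g \coloneqq \bargr T = \limsup_{r} \abs{D_r(\rootvert)}^{1/r} \ge 1$.

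One inequality is free: since $D_r(\rootvert) \subseteq B_r(\rootvert)$ we have $\abs{B_r(\rootvert)}^{1/r} \ge \abs{D_r(\rootvert)}^{1/r}$ for each $r$, so $\limsup_{r} \abs{B_r(\rootvert)}^{1/r} \ge g$. For the reverse inequality I would fix $\varepsilon > 0$, choose $N$ with $\abs{D_j(\rootvert)} < (g+\varepsilon)^j$ for all $j \ge N$, set $C \coloneqq \abs{B_{N-1}(\rootvert)} = \sum_{j=0}^{N-1} \abs{D_j(\rootvert)} < \infty$, and estimate, for $r \ge N$,
	\[ \abs{B_r(\rootvert)} = C + \sum_{j=N}^{r} \abs{D_j(\rootvert)} < C + \sum_{j=N}^{r} (g+\varepsilon)^j \le C + \frac{(g+\varepsilon)^{r+1}}{(g+\varepsilon) - 1}, \]
where the last step uses $g + \varepsilon > 1$. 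The right-hand side is bounded by $K(g+\varepsilon)^r$ for a constant $K$ depending only on $\varepsilon$ and $N$ (absorbing $C$ using $(g+\varepsilon)^r \ge 1$), so $\abs{B_r(\rootvert)}^{1/r} \le K^{1/r}(g+\varepsilon) \to g+\varepsilon$; letting $\varepsilon \downarrow 0$ gives $\limsup_{r} \abs{B_r(\rootvert)}^{1/r} \le g$, and combining the two inequalities finishes the proof.

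I do not anticipate a genuine obstacle here; the one place where care is needed is the geometric-tail bound, which requires the ratio $g+\varepsilon$ to exceed $1$ — precisely the reason for recording $\bargr T \ge 1$ at the start, and the reason the hypothesis that $T$ is infinite cannot be dropped (for a finite tree the left side of the claimed identity is $0$ while the right side is $1$).
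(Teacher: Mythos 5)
Your proof is correct, and it takes a mildly different route from the paper's on the only nontrivial direction. Both proofs start from $\abs{B_r(\rootvert)} = \sum_{k=0}^r \abs{D_k(\rootvert)}$; the paper bounds this sum by $(r+1)\max_{0\le k\le r}\abs{D_k(\rootvert)}$, lets the polynomial factor wash out under the $r$-th root, and then trades the exponent $1/r$ for $1/k$ (using $\abs{D_k(\rootvert)}\ge 1$) before taking an infimum over the cutoff $K$. You instead run an $\varepsilon$-of-room argument: dominate the tail of the sum by a geometric series with ratio $g+\varepsilon>1$ and use that such a series is comparable to its last term. The two implementations are equally valid; yours needs the explicit observation $\bargr T\ge 1$ to make the geometric comparison work, which you correctly supply, while the paper's version needs the same fact only implicitly (in the exponent swap). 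A point in your favor: you make explicit that the identity genuinely requires $T$ to be infinite (left side $0$, right side $1$ for finite trees), a hypothesis the paper leaves tacit even though its own proof also breaks down in the finite case. One degenerate case you pass over silently: if $\bargr T=+\infty$ your choice of $N$ is unavailable, but then the reverse inequality is vacuous by the easy direction, so nothing is lost; a one-line remark to that effect would make the write-up airtight.
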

\begin{proof}
	Since $D_r(\rootvert) \subseteq B_r(\rootvert)$, we clearly have $\bargr T \leq \limsup_{r \to \infty} \abs{B_r(\rootvert)}^{1/r}$.
	
	For the converse inequality, note that
		\[ \abs*{B_r(\rootvert)} = \sum_{k=0}^r \abs{D_k(\rootvert)} \leq (r+1) \max_{0 \leq k \leq r} \abs{D_k(\rootvert)} , \]
	so that
		\[ \limsup_{r \to \infty} \abs{B_r(\rootvert)}^{1/r} \leq \limsup_{r \to \infty} \max_{0 \leq k \leq r} \abs{D_k(\rootvert)}^{1/r}. \]
	For any fixed $K$ we have $\limsup_{r \to \infty} \max_{0 \leq k < K} \abs{D_k(\rootvert)}^{1/r} = 1$, so
		\[ \limsup_{r \to \infty} \abs{B_r(\rootvert)}^{1/r} \leq \limsup_{r \to \infty} \max_{K \leq k \leq r} \abs{D_k(\rootvert)}^{1/r} \leq \limsup_{r \to \infty} \max_{K \leq k \leq r} \abs{D_k(\rootvert)}^{1/k} = \sup_{K \leq k} \abs{D_k(\rootvert)}^{1/k} .\]
	Taking the infimum over $K$ finishes the proof.
\end{proof}

Given $v \in V$, let $T_v$ denote the subgraph induced by the set of descendants of $v$, considered as a tree rooted at $v$. A tree $T$ is called \emph{subperiodic} if there exists $R \geq 0$ such that for any $v \in V$ there exists $w \in V$ with $d(\rootvert,w) \leq R$ such that $T_v$ is isomorphic to a subtree of $T_w$ (see \cite{lyons2016}).

\begin{prop}
	If $T$ is subperiodic then $\bargr T = \maxgr T$.
\end{prop}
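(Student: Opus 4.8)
The plan is to show that subperiodicity forces the local growth rates $\abs{D_r(v)}^{1/r}$ to be asymptotically no larger than $\bargr T$, so that the $\limsup$ over all $v$ in the definition of $\maxgr T$ cannot exceed $\bargr T$; the reverse inequality $\bargr T \leq \maxgr T$ is automatic. Fix the constant $R$ from the definition of subperiodicity. Given any vertex $v$ and any $r$, choose $w$ with $d(\rootvert, w) \leq R$ such that $T_v$ embeds as a subtree of $T_w$; then $\abs{D_r(v)} \leq \abs{D_r(w)}$, where the right-hand side counts descendants in $T_w$, i.e.\ vertices of the ambient tree at distance exactly $r$ from $w$ and below $w$. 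Since $d(\rootvert, w) \leq R$, every such vertex lies within distance $r + R$ of the root, so $\abs{D_r(w)} \leq \abs{B_{r+R}(\rootvert)}$.

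Combining these bounds gives $\abs{D_r(v)} \leq \abs{B_{r+R}(\rootvert)}$ for every $v \in V$ and every $r$, hence
\[ \max_{v \in V} \abs{D_r(v)}^{1/r} \leq \abs{B_{r+R}(\rootvert)}^{1/r}. \]
Taking $\limsup$ as $r \to \infty$ and using Proposition \ref{prop:altgrowthformula} together with the fact that $\abs{B_{r+R}(\rootvert)}^{1/r} = \left(\abs{B_{r+R}(\rootvert)}^{1/(r+R)}\right)^{(r+R)/r}$ and $(r+R)/r \to 1$, we obtain
\[ \maxgr T = \limsup_{r \to \infty} \max_{v \in V} \abs{D_r(v)}^{1/r} \leq \limsup_{r \to \infty} \abs{B_{r+R}(\rootvert)}^{1/r} = \limsup_{s \to \infty} \abs{B_s(\rootvert)}^{1/s} = \bargr T. \]
Since $\bargr T \leq \maxgr T$ always holds, equality follows.

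The only slightly delicate point is that the exponent in the definition of $\maxgr T$ is $1/r$ rather than $1/(r+R)$, so one must check that shifting the ball radius by the bounded amount $R$ does not change the exponential growth rate — this is exactly the routine observation that $\abs{B_{r+R}}^{1/r}$ and $\abs{B_{r+R}}^{1/(r+R)}$ have the same $\limsup$, valid whenever the quantities are at least $1$ (which they are, since balls are nonempty). I do not anticipate any real obstacle here; the content is entirely in recognizing that subperiodicity lets one replace an arbitrary subtree $T_v$ by one rooted near the origin, and that Proposition \ref{prop:altgrowthformula} converts the resulting ball count back into $\bargr T$.
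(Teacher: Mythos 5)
Your proposal is correct and follows essentially the same route as the paper: use subperiodicity to replace an arbitrary $T_v$ by a subtree of some $T_w$ with $d(\rootvert,w)\leq R$, bound $\abs{D_r(v)}$ by $\abs{B_{r+R}(\rootvert)}$, and conclude via Proposition \ref{prop:altgrowthformula}. You are in fact slightly more careful than the paper in justifying that shifting the ball radius by $R$ does not change the $\limsup$.
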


\begin{proof}
	We just need to check that $\bargr T \geq \maxgr T$, since the converse holds for general trees. Let $R \geq 0$ be as given by the definition of subperiodicity and for each $r$ let $v(r)$ be a vertex maximizing $\abs{D_r(v)}$; note we can take $d(\rootvert,v(r)) \leq R$. Then $D_r(v(r)) \subseteq B_{r+R}(\rootvert)$, so by Proposition \ref{prop:altgrowthformula}
		\[ \maxgr T = \limsup_{r \to \infty} \abs{D_r(v(r))}^{1/r} \leq \limsup_{r \to \infty} \abs{B_{r+R}(\rootvert)}^{1/r} = \limsup_{r \to \infty} \abs{B_r(\rootvert)}^{1/r} = \bargr T. \qedhere \]
\end{proof}

\subsection{Markov measures indexed by trees}
\label{sec:chains}
%

We essentially follow the definitions from \cite{benjamini1994}, but with some natural modifications to allow for a continuous state space.

Let $T = (V,E)$ be a finite tree of depth $r$ and $\scrH$ be a Polish metric space of diameter at most 1. A Markov measure indexed by $T$ is a measure $\nu \in \Prob(\scrH^V)$ given in terms of its marginal $\nu_\rootvert$ at the root and a collection of probability kernels $\{ q_v \colon \scrH \to \Prob(\scrH) \st v \in V \setminus \{\rootvert\}\}$ as follows: for Borel $A \subseteq \scrH^V$,
	\[ \nu(A) = \int_{\scrH^{L_0}} \int_{\scrH^{L_1}} \cdots \int_{\scrH^{L_r}} \1_A(y) \prod_{v \in L_k} q_v(dy_v | y_{\pi(v)}) \cdots \prod_{v \in L_1} q_v(dy_v | y_\rootvert)\, \nu_\rootvert(dy_\rootvert) . \]
If $\scrH$ is at most countable and $A = \{a\}$ for $a \in \scrH^V$ then this reduces to the standard formula
	\[ \nu(\{a\}) = \nu_\rootvert(\{a_\rootvert\}) \prod_{v \in V \setminus \{\rootvert\}} q_v(\{a_v\}|a_{\pi(v)}) . \]

If each kernel is $b$-Lipschitz as a map $(\scrH,d) \to (\Prob(\scrH),\bar{d})$ then we say that the Markov measure $\nu$ is $b$-Lipschitz. 

\subsection{Descendant generating function}
\label{sec:delta}
In this section we define the parameter $\Delta$ which appears in the bounds of the main results of this paper.

Fix $b \in [0,1)$ and a finite rooted tree $T$. We define a function $\delta \colon V \to \RR$ by setting
	\[ \delta(v) = \sum_{r = 0}^\infty \abs*{D_r(v)}\, b^r . \]
We also define
	\[ \Delta = \norm{\delta}_{\ell^2(V)} = \left( \sum_{v \in V} \delta(v)^2 \right)^{1/2} . \]
Because of its coefficients as a power series in $b$ we refer to $\delta(v)$ as the \emph{descendant generating function at $v$}. For infinite trees we could consider $\delta(v)$ to be a formal power series or restrict $b$ to be smaller than the radius of convergence $(\limsup\abs*{D_r(v)}^{1/r})^{-1} = (\bargr T_v)^{-1}$, but since $T$ is finite only finitely many terms of the series are nonzero. To simplify notation, the dependence on $b$ will always be kept implicit; in context $b$ will typically be the Lipschitz constant of the relevant Markov measure.

We have the following equivalent characterization of $\delta$:
\begin{lem}
	If $v \in V$ is a leaf then $\delta(v)=1$. If $v$ is not a leaf, $\delta$ satisfies the recurrence
		\[ \delta(v) = 1 + b \sum_{w \st \pi(w) = v} \delta(w) . \]
\end{lem}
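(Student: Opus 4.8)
The plan is to prove the recurrence directly from the definition $\delta(v) = \sum_{r=0}^\infty |D_r(v)|\, b^r$ by separating the $r=0$ term from the rest and re-indexing. First, if $v$ is a leaf, then $D_0(v) = \{v\}$ and $D_r(v) = \emptyset$ for all $r \geq 1$, so the series collapses to its first term, giving $\delta(v) = |D_0(v)|\, b^0 = 1$.

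For the non-leaf case, the key observation is that the set of descendants of $v$ in generation $r \geq 1$ is the disjoint union, over the children $w$ of $v$, of the descendants of $w$ in generation $r-1$; that is, $D_r(v) = \bigsqcup_{w\,:\,\pi(w)=v} D_{r-1}(w)$, and this union is disjoint because $T$ is a tree (distinct children have disjoint descendant subtrees). Hence $|D_r(v)| = \sum_{w\,:\,\pi(w)=v} |D_{r-1}(w)|$ for $r \geq 1$. I would then write
\[
\delta(v) = |D_0(v)| + \sum_{r=1}^\infty |D_r(v)|\, b^r = 1 + \sum_{r=1}^\infty b^r \sum_{w\,:\,\pi(w)=v} |D_{r-1}(w)|,
\]
interchange the two (finite, since $T$ is finite) sums, substitute $s = r-1$, and factor out one power of $b$:
\[
\delta(v) = 1 + b \sum_{w\,:\,\pi(w)=v} \sum_{s=0}^\infty |D_s(w)|\, b^s = 1 + b \sum_{w\,:\,\pi(w)=v} \delta(w),
\]
which is the claimed recurrence.

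There is essentially no obstacle here; the only point requiring a moment's care is the justification that $D_r(v) = \bigsqcup_{\pi(w)=v} D_{r-1}(w)$ as a disjoint union, which follows because every descendant of $v$ other than $v$ itself has a unique first step from $v$ toward it (namely a unique child of $v$ that is $\leq$ it), and the subtrees rooted at distinct children are vertex-disjoint. The sum interchange is trivially valid since all sums have only finitely many nonzero terms (as $T$ is finite).
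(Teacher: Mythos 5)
Your proof is correct and follows essentially the same route as the paper's: both rest on the identity $\abs{D_r(v)} = \sum_{w \st \pi(w)=v} \abs{D_{r-1}(w)}$ for $r \geq 1$, followed by splitting off the $r=0$ term and re-indexing (the paper merely runs the computation from the right-hand side toward $\delta(v)$ rather than the reverse). Your extra remark justifying the disjointness of the decomposition of $D_r(v)$ is a harmless addition that the paper leaves implicit.
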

\begin{proof}
That $\delta(v) = 1$ on leaves follows immediately from the fact that a leaf has no descendants other than itself.

For the recurrence, note that for $v \in V$ and $r > 0$ we have
	\[ \abs*{D_r(v)} = \sum_{w \st \pi(w) = v} \abs*{D_{r-1}(w)} . \]
Therefore
	\begin{align*}
		1+b \sum_{w \st \pi(w) = v} \delta(w) &= 1+b \sum_{w \st \pi(w) = v} \sum_{r = 0}^\infty \abs*{D_r(w)}\, b^r \\
		&= \abs*{D_0(v)}\, b^0 + \sum_{r=0}^\infty \sum_{w \st \pi(w) = v} \abs*{D_r(w)}\, b^{r+1} \\
		&= \abs*{D_0(v)}\, b^0 + \sum_{r=0}^\infty \abs*{D_{r+1}(v)}\, b^{r+1} \\
		&= \delta(v). \qedhere
	\end{align*}
\end{proof}

In some situations below we will be interested in the descendant generating functions of finite subtrees of some fixed infinite tree. The following lemma will be useful:

\begin{lem}
\label{lem:deltaQ}
	Let $T = (V,E)$ be an infinite rooted tree and let $T' = (V',E')$ be a finite subtree with the same root. Let $Q \colon \RR^V \to \RR^V$ be given by $Qf(v) = \sum_{w \st \pi(w) = v} f(w)$; then the descendant generating function of $T'$ is given by
		\[ \delta'(v) = \left( \sum_{j=0}^\infty (bQ)^j \1_{V'} \right) (v) \quad \text{for } v \in V'. \]
\end{lem}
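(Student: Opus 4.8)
The plan is to prove Lemma \ref{lem:deltaQ} directly from the series definition of the descendant generating function, exploiting the observation that counting descendants of $v$ that lie in $V'$ amounts to iterating the ``children'' operator $Q$ restricted to $V'$. The key point is that, because $T'$ is a subtree of $T$ with the same root, its vertex set is \emph{downward closed under the child relation within $T'$ itself}—though not necessarily within $T$—so the descendants of $v$ in the tree $T'$ are precisely those descendants of $v$ in $T$ that happen to lie in $V'$ together with all the vertices on the $T$-path between them and $v$; since $T'$ is connected and contains the root, this last condition is automatic. Thus for $v \in V'$ we have $D'_r(v) = \{w \in V' : v \le w,\ d(v,w) = r\}$, where $\le$ and $d$ are taken in the ambient tree $T$.

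First I would record the combinatorial identity
	\[ \abs{D'_r(v)} = \big( Q^r \1_{V'} \big)(v) \quad \text{for } v \in V', \]
which follows by induction on $r$: for $r = 0$ both sides equal $\1_{V'}(v) = 1$, and the inductive step uses that a vertex $w$ with $d(v,w) = r$ and $v \le w$ is reached by first passing to a child $u$ of $v$ (necessarily satisfying $v < u$) and then choosing a descendant of $u$ at distance $r-1$; the membership $w \in V'$ is equivalent to $u \in V'$ and $w$ being a descendant of $u$ within $V'$, because $T'$ is a subtree containing $v$. Concretely, $\big(Q^r \1_{V'}\big)(v) = \sum_{u : \pi(u) = v} \big(Q^{r-1}\1_{V'}\big)(u)$, and by the inductive hypothesis each term is $\abs{D'_{r-1}(u)}$ if $u \in V'$ and is a count of $T$-descendants otherwise; but when $v \in V'$, any child $u$ of $v$ with $u \notin V'$ has no descendants in $V'$ at all (such a descendant would force $u \in V'$ by connectedness of $T'$), so those terms vanish and we recover $\sum_{u \in V',\, \pi(u) = v} \abs{D'_{r-1}(u)} = \abs{D'_r(v)}$.

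With this identity in hand, the conclusion is immediate: for $v \in V'$,
	\[ \delta'(v) = \sum_{r=0}^\infty \abs{D'_r(v)}\, b^r = \sum_{r=0}^\infty b^r \big(Q^r \1_{V'}\big)(v) = \left( \sum_{j=0}^\infty (bQ)^j \1_{V'} \right)(v), \]
where the interchange of sum and evaluation is harmless because $T'$ is finite, so only finitely many terms are nonzero (for $r$ exceeding the depth of $T'$, $D'_r(v) = \emptyset$). I would remark that the operator $Q$ in the statement is defined on all of $\RR^V$, but only its action on functions supported on $V'$, evaluated at points of $V'$, is relevant; one could equally well work with the restriction of $Q$ to $\RR^{V'}$.

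The main obstacle—really the only subtlety—is the bookkeeping in the induction: one must be careful that $D'_r(v)$ is defined using the tree structure of $T'$, and argue cleanly that for $v \in V'$ this coincides with the set of $T$-descendants of $v$ at distance $r$ that lie in $V'$. This rests entirely on the hypothesis that $T'$ is a subtree of $T$ sharing the root (equivalently, that $V'$ is connected in $T$ and $\rootvert \in V'$), which guarantees that any $w \in V'$ with $v \le w$ in $T$ has its entire $v$-to-$w$ path inside $V'$, so distances and the ancestor relation agree between $T$ and $T'$ on $V'$. Once this is stated carefully the rest is a routine unwinding of definitions.
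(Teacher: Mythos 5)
Your proposal is correct and follows essentially the same route as the paper: the same identity $\abs{D_r'(v)} = (Q^r \1_{V'})(v)$ proved by induction on $r$, followed by summing the finitely many nonzero terms of the generating series. You are in fact slightly more careful than the paper's own proof about why children of $v$ lying outside $V'$ contribute nothing (connectedness of $T'$ through the root), which is the only subtlety in the argument.
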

\begin{proof}
	Let
		\[ D_j'(v) \coloneqq \{ w \in V' \st v \leq w \text{ and } d(v,w) = j \} \]
	denote the set of descendants of $v$ in the $j$th generation; then
		\[ \delta'(v) = \sum_{j=0}^k \abs*{D_j'(v)} b^j . \]
	Note that we can truncate the sum at some finite $k$ since $T'$ is finite.
	
	Now we show that $\abs*{D_j'(v)} = Q^j \1_{V'}(v)$. This is clear for $j=0$, and the remaining cases follow by induction: assuming $\abs*{D_{j-1}'(v)} = Q^{j-1} \1_{V'}(v)$, we have
	\begin{align*}
		\abs*{D_j'(v)} &= \sum_{w \st \pi(w) = v} \abs*{D_{j-1}'(w)} \\
		&= \sum_{w \st \pi(w) = v} Q^{j-1} \1_{V'}(w) \\
		&= Q^{j-1} \sum_{w \st \pi(w) = v} \1_{V'}(w) \\
		&= Q^{j-1} \left[ Q \1_{V'}(v) \right] \\
		&= Q^j \1_{V'}(v).
	\end{align*}
	Therefore 
		\[ \delta'(v) = \sum_{j=0}^k [Q^j \1_{V'}(v)] b^j = \left( \sum_{j=0}^k (bQ)^j \right) \1_{V'}(v) . \qedhere \]
\end{proof}

The same operator $Q$ is considered in \cite{lyons1990}. There, the relevant quantity is the branching factor $\br T$, which turns out to be the radius of the point spectrum of $Q$; the following operator norm calculation (which will also be useful below) along with Gelfand's formula implies that $\sqrt{\maxgr T}$ is the spectral radius of $Q$.

\begin{prop}
\label{prop:Qopnorm}
	\[ \norm{Q^j} = \sqrt{\max_v \abs{D_j(v)}} . \]
\end{prop}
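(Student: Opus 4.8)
The plan is to prove the two inequalities $\norm{Q^j} \le \sqrt{\max_v \abs{D_j(v)}}$ and $\norm{Q^j} \ge \sqrt{\max_v \abs{D_j(v)}}$ separately, working directly with the explicit formula
\[ Q^j f(v) = \sum_{w \in D_j(v)} f(w), \]
which follows by the same induction on $j$ as in the proof of Lemma \ref{lem:deltaQ} (the case of a general $f$ is no harder than the case $f = \1_{V'}$ treated there). The one structural fact I would isolate first is that the sets $\{D_j(v) : v \in V\}$ are pairwise disjoint: a vertex $w$ belongs to $D_j(v)$ exactly when $v = \pi^j(w)$, so $w$ lies in precisely one such set when $d(\rootvert, w) \ge j$ and in none otherwise. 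Hence for any nonnegative function $g$ on $V$ we have $\sum_{v \in V} \sum_{w \in D_j(v)} g(w) \le \sum_{w \in V} g(w)$.

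For the upper bound, given $f \in \ell^2(V)$ I would apply Cauchy--Schwarz inside each fiber and then sum:
\[ \norm{Q^j f}^2 = \sum_{v \in V} \Big( \sum_{w \in D_j(v)} f(w) \Big)^2 \le \sum_{v \in V} \abs{D_j(v)} \sum_{w \in D_j(v)} f(w)^2 \le \Big( \max_{v} \abs{D_j(v)} \Big) \sum_{v \in V} \sum_{w \in D_j(v)} f(w)^2, \]
and the disjointness observation bounds the last double sum by $\norm{f}^2$. This gives $\norm{Q^j} \le \sqrt{\max_v \abs{D_j(v)}}$.

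For the reverse inequality I would exhibit a near-extremizer: choose $v^*$ with $m \coloneqq \abs{D_j(v^*)} = \max_v \abs{D_j(v)}$ and test with $f = \1_{D_j(v^*)}$, so $\norm{f}^2 = m$. Since the ancestors of any vertex form a chain, $D_j(v) \cap D_j(v^*) \ne \emptyset$ would force $v$ and $v^*$ to be comparable and at the same depth, hence equal; therefore $Q^j f(v) = m$ if $v = v^*$ and $0$ otherwise, so $\norm{Q^j f}^2 = m^2$ and $\norm{Q^j f}/\norm{f} = \sqrt{m}$. Combining the two bounds yields the claimed equality. (If $T$ is infinite with $\max_v \abs{D_j(v)} = \infty$, the same test functions on ever-larger fibers show $Q^j$ is unbounded, matching the convention $\sqrt{\infty} = \infty$.)

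I do not expect a serious obstacle here; the only point requiring a moment's care is the chain structure of the fibers of $\pi^j$ used in both directions, namely that distinct vertices cannot share a common descendant lying at combinatorial distance exactly $j$ from each of them. An alternative, slightly slicker, route would be to compute the adjoint $Q^* g(w) = g(\parent{w})$ (with $Q^* g(\rootvert) = 0$), note that $(Q^*)^j Q^j$ is block diagonal with a $\abs{D_j(v)} \times \abs{D_j(v)}$ all-ones block on each fiber $D_j(v)$ together with zeros on vertices of depth less than $j$, and read off $\norm{Q^j}^2 = \norm{(Q^*)^j Q^j} = \max_v \abs{D_j(v)}$ from the fact that an $m \times m$ all-ones matrix has operator norm $m$; but the direct computation above avoids introducing adjoints at all.
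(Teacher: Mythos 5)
Your proof is correct and follows essentially the same strategy as the paper's: the lower bound uses the identical test function $\1_{D_j(v^*)}$ on a maximizing fiber, and the upper bound is a direct norm estimate. The only difference is that the paper obtains the upper bound by passing to the adjoint and computing $\norm{(Q^*)^j f}_2^2 = \sum_v \abs{D_j(v)} f(v)^2$ exactly (so no Cauchy--Schwarz is needed), whereas you apply Cauchy--Schwarz fiberwise to $Q^j$ itself together with the disjointness of the fibers --- a variant you yourself flag when you mention the adjoint route as an alternative.
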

\begin{proof}
For $w \ne \rootvert$, let $\pi(w)$ denote the parent of $w$. We claim that the adjoint $Q^*$ is given by
		\[ Q^* f(w) = \left\{ \begin{array}{ll} f(\pi(w)), & w \ne \rootvert \\ 0, & w = \rootvert . \end{array} \right. \]
	To see this, we check that for every $f,g \in \ell^2(V)$ we have
	\begin{align*}
		\inprod{Qf}{g} &= \sum_{v \in V} Qf(v) \overline{g(v)} \\
		&= \sum_{v \in V} \sum_{w \st \pi(w) = v} f(w) \overline{g(\pi(w))} \\
		&= \sum_{w \in V} \sum_{v \st \pi(w) = v} f(w) \overline{g(\pi(w))} \\
		&= \sum_{w \in V} f(w) \overline{g(\pi(w)) \1_{w \ne \rootvert}}.
	\end{align*}
	
	Now for any $f \in \ell^2(V)$ we have
	\begin{align*}
		\norm{(Q^*)^j f}_2 &= \left( \sum_{w \in V \setminus \{\rootvert\}} \big( f(\pi^j(w)) \big)^2 \right)^{1/2} \\
		&= \left( \sum_{v \in V} \abs{\{ w \in V  \st \pi^j(w) = v\}} f(v)^2 \right)^{1/2} \\
		&= \left( \sum_{v \in V} \abs{D_j(v)} f(v)^2 \right)^{1/2} \\
		&\leq \sqrt{\max_{v\in V} \abs{D_j(v)}}\, \norm{f}_2,
	\end{align*}
	so $\norm{Q^j} = \norm{(Q^*)^j} \leq \sqrt{\max_v \abs{D_j(v)}}$. If $w \in V$ is such that $\abs{D_j(w)}$ is maximal then
		\[ \norm*{Q^j \frac{\1_{D_j(w)}}{\sqrt{\abs{D_j(w)}}}} = \norm*{\frac{\abs{D_j(w)} \1_{\{w\}}}{\sqrt{\abs{D_j(w)}}}} = \sqrt{\abs{D_j(w)}} = \sqrt{\max_v \abs{D_j(v)}}. \]
	Therefore in fact $\norm{Q^j} = \sqrt{\max_v \abs{D_j(v)}}$
\end{proof}
	
The following is used in the proof of Theorem \ref{thm:growthrate} to estimate $\Delta$; in particular it implies that changing the root affects $\Delta$ by at most a factor of $\sqrt{1-b^2}$, independent of any properties of the tree:

\begin{prop}
\label{prop:Deltabound}
For any tree $T$ and $b \in [0,1)$,
	\[ \sum_{(w_1,w_2) \in V^2} b^{d(w_1,w_2)} \leq \Delta^2  \leq \frac{1}{1-b^2} \sum_{(w_1,w_2) \in V^2} b^{d(w_1,w_2)} . \]
\end{prop}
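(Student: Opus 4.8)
The plan is to expand $\Delta^2 = \sum_{v \in V} \delta(v)^2$ into a sum indexed by \emph{ordered pairs} of vertices and reorganize it around the meet operation. Writing $\delta(v) = \sum_{w \st v \leq w} b^{d(v,w)}$ (i.e. grouping the descendants of $v$ by generation), we get
\[ \delta(v)^2 = \sum_{w_1 \st v \leq w_1} \ \sum_{w_2 \st v \leq w_2} b^{d(v,w_1) + d(v,w_2)} . \]
Summing over $v$ and exchanging the order of summation — legitimate since every summand is nonnegative, so Tonelli applies even when the sums are infinite — gives
\[ \Delta^2 = \sum_{(w_1, w_2) \in V^2} \ \sum_{v \st v \leq w_1 \wedge w_2} b^{d(v, w_1) + d(v, w_2)} , \]
using that $v$ is a common ancestor of $w_1$ and $w_2$ exactly when $v \leq w_1 \wedge w_2$.

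Next I would simplify the inner sum. For $m = w_1 \wedge w_2$ and any $v \leq m$ we have $d(v,w_i) = d(v,m) + d(m,w_i)$, and also $d(w_1,w_2) = d(m,w_1) + d(m,w_2)$, so $d(v,w_1) + d(v,w_2) = 2 d(v,m) + d(w_1,w_2)$. Pulling the factor $b^{d(w_1,w_2)}$ out of the inner sum leaves $\sum_{v \st v \leq m} b^{2 d(v,m)}$. The vertices $v$ with $v \leq m$ are exactly the ancestors of $m$ (including $m$ itself), which form the path from the root to $m$; hence this is the finite geometric sum
\[ \sum_{j=0}^{d(\rootvert, m)} b^{2j} = \frac{1 - b^{2(d(\rootvert,m)+1)}}{1-b^2} , \]
and since $b \in [0,1)$ this quantity lies in the interval $\big[1, \tfrac{1}{1-b^2}\big)$ (it equals its $j=0$ term when $m = \rootvert$ and increases monotonically toward $\tfrac{1}{1-b^2}$).

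Finally, substituting these bounds into
\[ \Delta^2 = \sum_{(w_1,w_2) \in V^2} b^{d(w_1,w_2)} \sum_{v \st v \leq w_1 \wedge w_2} b^{2 d(v, \, w_1 \wedge w_2)} \]
yields the claimed two-sided estimate immediately. I do not expect a genuine obstacle here; the only point requiring care is the bookkeeping in the change of summation order, together with the observation that $\{v \st v \leq w_1,\ v \leq w_2\}$ is precisely the chain of ancestors of $w_1 \wedge w_2$ — this is what turns the inner sum into a clean geometric series with a length depending only on $d(\rootvert, w_1 \wedge w_2)$, and it is also what makes transparent the remark that re-rooting changes $\Delta$ by at most a factor $\sqrt{1-b^2}$, since the quantity $\sum_{(w_1,w_2)} b^{d(w_1,w_2)}$ does not depend on the choice of root.
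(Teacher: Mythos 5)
Your proof is correct and follows essentially the same route as the paper's: expand $\Delta^2$ over ordered pairs, swap the order of summation, use the identity $d(v,w_1)+d(v,w_2) = d(w_1,w_2) + 2d(v, w_1\wedge w_2)$, and bound the resulting inner geometric sum between $1$ and $\tfrac{1}{1-b^2}$. The only (harmless) difference is that you evaluate the inner sum exactly as a finite geometric series before bounding it, whereas the paper bounds it directly.
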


\begin{proof}
By definition of $\delta(v)$, we have
	\[ \delta(v)^2 = \sum_{r=0}^\infty \abs{\{ (w_1, w_2) \in V^2 \st v \leq w_1 \wedge w_2,\ d(w_1, v) + d(w_2, v) = r \}} \cdot b^r \]
and hence
\begin{align*}
	\Delta^2 = \sum_{v \in V} \delta(v)^2 &= \sum_{(w_1,w_2) \in V^2} \sum_{v \leq w_1 \wedge w_2} b^{d(w_1, v) + d(w_2, v)} \\
	&= \sum_{(w_1,w_2) \in V^2} \sum_{v \leq w_1 \wedge w_2} b^{d(w_1, w_2) + 2 d(v,w_1 \wedge w_2) } \\
	&= \sum_{(w_1,w_2) \in V^2} b^{d(w_1,w_2)} \sum_{v \leq w_1 \wedge w_2} (b^2)^{d(v,w_1 \wedge w_2) } . \\
\end{align*}
Bounding the inner sum above by $\frac{1}{1-b^2}$ gives the upper bound, and bounding it below by 1 gives the lower bound.
\end{proof}

We also have the following bound on $\Delta$ mentioned in the introduction which, while less precise and broadly applicable than Proposition $\ref{prop:Deltabound}$ and not used below, has the advantage of a simpler dependence on the tree structure:

\begin{prop}
\label{prop:altDeltabound}
	If each vertex of $T$ has at most $d$ children and $bd<1$ then
	\[ \Delta \leq \frac{\sqrt{n}}{1-bd}. \]
\end{prop}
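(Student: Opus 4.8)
The plan is to bound $\delta(v)$ uniformly over all $v \in V$ and then sum. First I would observe that the degree hypothesis forces $\abs{D_r(v)} \leq d^r$ for every vertex $v$ and every $r \geq 0$: a vertex has at most $d$ children, so by an immediate induction on $r$ (using the identity $\abs{D_r(v)} = \sum_{w : \pi(w) = v} \abs{D_{r-1}(w)}$ already recorded above) the number of descendants $r$ generations down is at most $d^r$.

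Next I would plug this into the definition $\delta(v) = \sum_{r=0}^\infty \abs{D_r(v)}\, b^r$ to get
\[ \delta(v) \leq \sum_{r=0}^\infty (bd)^r = \frac{1}{1-bd}, \]
where the geometric series converges precisely because $bd < 1$. This bound is uniform in $v$, so
\[ \Delta^2 = \sum_{v \in V} \delta(v)^2 \leq \sum_{v \in V} \frac{1}{(1-bd)^2} = \frac{n}{(1-bd)^2}, \]
and taking square roots gives $\Delta \leq \sqrt{n}/(1-bd)$.

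There is essentially no obstacle here; the only thing to be careful about is justifying the termwise bound on the (possibly formal, in the infinite case, but here finite) power series and the convergence of the geometric series, both of which are routine given $bd < 1$. If one prefers to avoid the series manipulation, the same estimate follows just as easily from the recurrence $\delta(v) = 1 + b\sum_{w : \pi(w)=v} \delta(w)$ by induction on the height of $v$: if all children $w$ of $v$ satisfy $\delta(w) \leq 1/(1-bd)$, then $\delta(v) \leq 1 + b d /(1-bd) = 1/(1-bd)$.
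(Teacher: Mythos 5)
Your argument is correct and is essentially identical to the paper's proof: bound $\abs{D_r(v)} \leq d^r$, sum the geometric series to get $\delta(v) \leq 1/(1-bd)$ uniformly, and then sum the squares over $V$. The alternative via the recurrence that you mention at the end is a fine variant but not needed.
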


\begin{proof}
	The bound on the number of children of each vertex gives $\abs{D_r(w)} \leq d^r$ for any $w \in V$ and $r \in \NN$; therefore, using that $bd<1$, for any $w \in V$ we have
		\[ \delta(w) = \sum_{r=0}^\infty \abs{D_r(w)}\, b^r \leq \sum_{r=0}^\infty d^r b^r = \frac{1}{1-bd} . \]
	Hence
		\[ \Delta = \left( \sum_{w \in V} \delta(w)^2 \right)^{1/2} \leq \left( n \left( \frac{1}{1-bd} \right)^2 \right)^{1/2} = \frac{\sqrt{n}}{1-bd} . \qedhere \]
\end{proof}

\subsection{Other Lemmas}

In the proof of Theorem \ref{thm:main} below we establish the exponential moment bound directly; the transportation-entropy inequality follows from the following equivalence due to Bobkov and G\"otze:
\begin{thm}[Theorem 1.3 from \cite{bobkov1999}]
	Let $(\Omega,d)$ be a bounded metric space and $\nu \in \Prob(\Omega)$. Then $\nu$ satisfies
		\[ \bar{d}(\mu,\nu) \leq C \sqrt{D(\mu \| \nu)} \]
	for all $\mu \in \Prob(\Omega)$
	if and only if
		\[ \int e^{\lambda f}\, d\nu \leq e^{C^2 \lambda^2/4} \]
	for all 1-Lipschitz $f$ with $\int f\, d\nu = 0$.
\end{thm}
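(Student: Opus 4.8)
The plan is to deduce both implications from two standard variational principles. The first is the Kantorovich--Rubinstein duality
\[ \bar{d}(\mu,\nu) = \sup\Big\{ \int f\, d\mu - \int f\, d\nu \st f \in \Lip_1(\Omega,d) \Big\}, \]
which holds because $(\Omega,d)$ is bounded, so that $1$-Lipschitz functions are bounded and hence integrable against every probability measure (see \cite{dudley2004}). The second is the Donsker--Varadhan / Gibbs variational formula: for bounded measurable $g$,
\[ \log \int e^{g}\, d\nu = \sup_{\mu \in \Prob(\Omega)} \Big\{ \int g\, d\mu - D(\mu\|\nu) \Big\}, \qquad D(\mu\|\nu) = \sup_{g}\Big\{ \int g\, d\mu - \log\int e^g\, d\nu\Big\}; \]
we only ever apply this to $g = \lambda f$ with $f$ $1$-Lipschitz, which is bounded. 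Granting these, each direction reduces to optimizing a quadratic in one real variable.

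\textbf{Transportation inequality $\Rightarrow$ exponential moment bound.} Fix $\lambda > 0$ and a $1$-Lipschitz $f$ with $\int f\, d\nu = 0$. For any $\mu \in \Prob(\Omega)$, Kantorovich duality and the hypothesis give $\lambda\int f\, d\mu = \lambda\bigl(\int f\, d\mu - \int f\, d\nu\bigr) \le \lambda\,\bar{d}(\mu,\nu) \le \lambda C\sqrt{D(\mu\|\nu)}$. Writing $t = \sqrt{D(\mu\|\nu)} \in [0,\infty]$, this yields $\int \lambda f\, d\mu - D(\mu\|\nu) \le \lambda C t - t^2 \le \sup_{t \ge 0}(\lambda C t - t^2) = C^2\lambda^2/4$. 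Taking the supremum over $\mu$ and applying the Donsker--Varadhan formula to $g = \lambda f$ gives $\log\int e^{\lambda f}\, d\nu \le C^2\lambda^2/4$; applying this to $-f$ covers $\lambda < 0$, and $\lambda = 0$ is trivial.

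\textbf{Exponential moment bound $\Rightarrow$ transportation inequality.} If $\mu \not\ll \nu$ then $D(\mu\|\nu) = +\infty$ and there is nothing to prove, so assume $\mu \ll \nu$. Fix $f \in \Lip_1(\Omega,d)$; replacing $f$ by $f - \int f\, d\nu$ changes neither $\int f\, d\mu - \int f\, d\nu$ nor the Lipschitz constant, so we may assume $\int f\, d\nu = 0$. For $\lambda > 0$, the variational lower bound for relative entropy together with the hypothesis give $D(\mu\|\nu) \ge \lambda\int f\, d\mu - \log\int e^{\lambda f}\, d\nu \ge \lambda\int f\, d\mu - C^2\lambda^2/4$, so $\int f\, d\mu \le D(\mu\|\nu)/\lambda + C^2\lambda/4$. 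Minimizing the right-hand side over $\lambda > 0$ (optimum at $\lambda = 2\sqrt{D(\mu\|\nu)}/C$) gives $\int f\, d\mu - \int f\, d\nu = \int f\, d\mu \le C\sqrt{D(\mu\|\nu)}$. Taking the supremum over all such $f$ and invoking Kantorovich duality completes the argument.

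\textbf{Main obstacle.} There is essentially no obstacle beyond correctly invoking the two duality formulas in the stated generality; everything else is elementary calculus. The points that deserve care are (i) that boundedness of $(\Omega,d)$ is precisely the hypothesis making $1$-Lipschitz test functions bounded, so both Kantorovich--Rubinstein and the Gibbs variational principle apply with no integrability subtleties, and (ii) that the equivalence is understood with the exponential-moment estimate holding for all real $\lambda$, which reduces to the case $\lambda>0$ via $f \mapsto -f$. Rather than reprove the two variational formulas it is cleanest to cite standard references (e.g.\ \cite{dudley2004} for Kantorovich--Rubinstein and \cite{dembo2010a} for the entropy variational principle).
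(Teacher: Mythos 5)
Your argument is correct, and it is essentially the standard proof of this equivalence (the same dual pair of variational formulas --- Kantorovich--Rubinstein for $\bar d$ and Donsker--Varadhan for $D(\cdot\|\nu)$ --- drives the original argument in \cite{bobkov1999}); the paper itself imports the theorem by citation and gives no proof, so there is nothing to compare beyond noting that your route is the canonical one. The only point worth flagging is that Kantorovich--Rubinstein duality in the form you quote requires the underlying space to be separable (or the measures to be tight), not merely bounded; this is harmless here since the theorem is only ever applied to $\scrH^V$ with $\scrH$ Polish, but you should state that hypothesis when invoking \cite{dudley2004}.
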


Within the proof of Theorem \ref{thm:main} we will use weighted Hamming metrics on product spaces: if $I$ is a finite index set and $\wg \colon I \to \RR^{>0}$ is a positive function on $I$ then we define a metric $d_\wg$ on $\scrH^I$ by
	\[ d_\wg(x,y) \coloneqq \frac{1}{\abs{I}} \sum_{i \in I} \wg(i)\, d(x_i, y_i) . \]
	
The resulting transportation metric $\overline{d_\wg}$ on $\Prob(\scrH^I)$ satisfies the following formula for product measures:

\begin{lem}
\label{lem:dbarproduct}
	For each $i \in I$ let $\mu_i, \nu_i \in \Prob(\scrH)$. For any positive weight function $\wg \colon I \to \RR^{>0}$ we have
		\[ \overline{d_\wg}\left(\bigtimes_{i \in I} \mu_i,\ \bigtimes_{i \in I} \nu_i \right) = \frac{1}{\abs{I}}\sum_{i\in I} \wg(i)\, \bar{d}(\mu_i, \nu_i). \]
\end{lem}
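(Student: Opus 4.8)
The plan is to prove the two inequalities separately, exploiting the fact that the cost $d_\wg$ decomposes as a weighted sum of per‑coordinate costs, so that both the evaluation of a coupling's cost and the passage to coordinate marginals interact well with the product structure. Throughout write $\mu = \bigtimes_{i \in I}\mu_i$ and $\nu = \bigtimes_{i \in I}\nu_i$.

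For the lower bound I would take an arbitrary coupling $\lambda \in \Prob\big((\scrH^I)^2\big)$ of $\mu$ and $\nu$, and for each $i \in I$ let $\lambda_i \in \Prob(\scrH^2)$ be the pushforward of $\lambda$ under the map $\big((x_j)_j,(y_j)_j\big) \mapsto (x_i,y_i)$. Since the $i$‑th coordinate marginal of $\mu$ is $\mu_i$ and likewise for $\nu$, each $\lambda_i$ is a coupling of $\mu_i$ and $\nu_i$. Then, interchanging sum and integral,
	\[ \int d_\wg(x,y)\, d\lambda(x,y) = \frac{1}{\abs{I}}\sum_{i \in I}\wg(i)\int d(x_i,y_i)\, d\lambda = \frac{1}{\abs{I}}\sum_{i \in I}\wg(i)\int d\, d\lambda_i \geq \frac{1}{\abs{I}}\sum_{i \in I}\wg(i)\,\bar d(\mu_i,\nu_i), \]
and taking the infimum over $\lambda$ gives $\overline{d_\wg}(\mu,\nu) \geq \frac{1}{\abs{I}}\sum_{i}\wg(i)\,\bar d(\mu_i,\nu_i)$.

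For the upper bound I would fix $\varepsilon > 0$ and, for each $i$, choose a coupling $\lambda_i$ of $\mu_i$ and $\nu_i$ with $\int d\, d\lambda_i \leq \bar d(\mu_i,\nu_i) + \varepsilon$ (such a coupling exists by the definition of the infimum; alternatively, since $\scrH$ is Polish and $d$ is bounded and continuous one may take an exact minimizer). Form $\lambda := \bigtimes_{i \in I}\lambda_i$, viewed as a measure on $(\scrH^I)^2$ after the obvious rearrangement of coordinates $\prod_i \scrH^2 \cong (\scrH^I)^2$. Its two $\scrH^I$‑marginals are $\bigtimes_i \mu_i$ and $\bigtimes_i \nu_i$, so $\lambda$ is an admissible coupling of $\mu$ and $\nu$, and
	\[ \overline{d_\wg}(\mu,\nu) \leq \int d_\wg\, d\lambda = \frac{1}{\abs{I}}\sum_{i \in I}\wg(i)\int d\, d\lambda_i \leq \frac{1}{\abs{I}}\sum_{i \in I}\wg(i)\,\bar d(\mu_i,\nu_i) + \frac{\varepsilon}{\abs{I}}\sum_{i \in I}\wg(i). \]
Letting $\varepsilon \to 0$ finishes the proof.

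I expect no genuine obstacle here: the lemma is essentially the statement that transport cost is additive over independent coordinates. The only points requiring a word of care, which I would state rather than belabor, are that the coordinate‑pair pushforward of a coupling of $\mu$ and $\nu$ is a coupling of $\mu_i$ and $\nu_i$, and that a product of couplings has the correct marginals — both of which are immediate from the definitions of marginal and product measure — together with the use of Tonelli's theorem to split the integral (legitimate since $d \leq 1$ and $\wg$ is finite on the finite set $I$).
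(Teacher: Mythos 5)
Your proof is correct and follows essentially the same route as the paper: the lower bound via coordinate-pair marginals of an arbitrary coupling, and the upper bound via products of (near-)optimal per-coordinate couplings. The only cosmetic difference is that you use an $\varepsilon$-approximation for the upper bound where the paper simply takes the infimum over all product couplings at the end.
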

\begin{proof}
	For each $i$, let $\lambda_i \in \Prob(\scrH^2)$ be a coupling of $\mu_i$ and $\nu_i$.
	Then $\bigtimes_{i \in I} \lambda_i$ is a coupling of $\bigtimes_{i \in I} \mu_i$ and $\bigtimes_{i \in I} \nu_i$, so
	\begin{align*}
		\overline{d_\wg}\left(\bigtimes_{i \in I} \mu_i,\ \bigtimes_{i \in I} \nu_i \right) &\leq \int_{(\scrH^2)^I} \frac{1}{\abs{I}} \sum_{i \in I} \wg(i)\, d(x_i, y_i) \prod_{i \in I} \lambda_i(dx_i, dy_i) \\
		&= \frac{1}{\abs{I}} \sum_{i \in I} \wg(i) \int_{\scrH^2} d(x_i, y_i) \lambda_i(dx_i, dy_i) .
	\end{align*}
	Taking the infimum over the $\lambda_i$'s gives
		\[ \overline{d_\wg}\left(\bigtimes_{i \in I} \mu_i,\ \bigtimes_{i \in I} \nu_i \right) \leq \frac{1}{\abs{I}} \sum_{i \in I} \wg(i) \bar{d}(\mu_i,\nu_i) . \]
	
	Conversely, let $\lambda$ be any coupling of $\bigtimes_{i \in I} \mu_i$ and $\bigtimes_{i \in I} \nu_i$, and let $\lambda_i$ be its marginals. Then each $\lambda_i$ is a coupling of $\mu_i$ and $\nu_i$, so
	\begin{align*}	
		\int_{(\scrH^2)^I} \left[ \frac{1}{\abs{I}} \sum_{i \in I} \wg(i)\, d(x_i, y_i) \right] \lambda(dx, dy) &= \frac{1}{\abs{I}} \sum_{i \in I} \left[ \int_{(\scrH^2)^I} \wg(i)\, d(x_i, y_i)\, \lambda(dx, dy) \right] \\
		&= \frac{1}{\abs{I}} \sum_{i \in I} \wg(i) \left[ \int_{\scrH^2} d(x_i, y_i)\, \lambda_i(dx_i, dy_i) \right] \\
		&\geq \frac{1}{\abs{I}} \sum_{i \in I} \wg(i)\, \bar{d}(\mu_i, \nu_i).
	\end{align*}
	Taking the infimum over all couplings $\lambda$ completes the proof.
\end{proof}

The following inequality due to Hoeffding is the foundation of the exponential moment bound in Theorem \ref{thm:main}. It is essentially used in \cite{hoeffding1963} but appears more explicitly (with proof) as Lemma 2.6 in McDiarmid's survey \cite{mcdiarmid1998}.

\begin{lem}
\label{lem:hoeffding}
	Let $(\Omega,\mu)$ be a probability space and let $f \colon \Omega \to \RR$ satisfy $\int f\, d\mu = 0$ and $\sup_{x,y \in \Omega} \abs{f(x) - f(y)} \leq L$. Then for any $\lambda \geq 0$
		\[ \int e^{\lambda f(x)} \mu(dx) \leq e^{\lambda^2 L^2/8} . \]
\end{lem}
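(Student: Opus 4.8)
The plan is to run the classical argument of Hoeffding, whose only ingredient is the convexity of the exponential. First I would normalize the setup: let $a$ be the essential infimum and $b$ the essential supremum of $f$ with respect to $\mu$. Since $\mu$ is a probability measure with $\int f\, d\mu = 0$ we have $a \le 0 \le b$, and the oscillation hypothesis gives $b - a \le L$. After discarding a $\mu$-null set we may assume $a \le f(x) \le b$ for every $x$. If $a = b$ then $f = 0$ $\mu$-a.e.\ and the claim is trivial, so assume $a < b$.

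The next step is a pointwise convexity bound. For $x \in [a,b]$, writing $x$ as the convex combination $x = \frac{b-x}{b-a}\,a + \frac{x-a}{b-a}\,b$ and using convexity of $t \mapsto e^{\lambda t}$ gives
\[ e^{\lambda x} \le \frac{b-x}{b-a}\, e^{\lambda a} + \frac{x-a}{b-a}\, e^{\lambda b}. \]
Integrating this against $\mu$ and using $\int f\, d\mu = 0$ kills the terms linear in $x$, leaving
\[ \int e^{\lambda f}\, d\mu \le (1-p)\, e^{\lambda a} + p\, e^{\lambda b}, \qquad p \coloneqq \frac{-a}{b-a} \in (0,1). \]

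It then remains to bound the right-hand side by $e^{\lambda^2 (b-a)^2/8}$, which combined with $b - a \le L$ finishes the proof. Put $u \coloneqq \lambda(b-a) \ge 0$; then $\lambda a = -pu$ and the right-hand side equals $e^{\phi(u)}$ with $\phi(u) \coloneqq -pu + \log\!\big(1 - p + p e^{u}\big)$. A direct computation shows $\phi(0) = 0$, $\phi'(0) = 0$, and $\phi''(u) = t(1-t)$, where $t = t(u) \coloneqq \frac{p e^{u}}{1 - p + p e^{u}} \in (0,1)$ and one uses the identity $t' = t(1-t)$; hence $\phi''(u) \le \tfrac14$ for all $u$. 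Taylor's theorem with Lagrange remainder then yields $\phi(u) \le u^2/8$, so $\int e^{\lambda f}\, d\mu \le e^{u^2/8} = e^{\lambda^2(b-a)^2/8} \le e^{\lambda^2 L^2/8}$.

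There is no genuine obstacle here: every step is elementary, and the only computation of any substance is verifying $\phi''(u) = t(1-t) \le \tfrac14$. Alternatively one can simply invoke Lemma 2.6 of \cite{mcdiarmid1998}, where exactly this statement appears with proof.
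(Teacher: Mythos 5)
Your proposal is correct, and it is exactly the classical argument: the paper does not prove this lemma itself but cites it as Lemma 2.6 of McDiarmid's survey, whose proof is the same convexity-plus-$\phi''\le\tfrac14$ computation you give (your final remark even notes this). No issues to report.
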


We will also need the following version of McDiarmid's inequality:

\begin{prop}
\label{prop:inhomog}
	Let $n \in \NN$ and let $\wg \colon \{1, 2, \ldots, n\} \to \RR^{>0}$ be a weight function. Given $p \in \Prob(\scrH)$ denote the product measure by $p^n \in \Prob(\scrH^n)$. For any 1-Lipschitz $f\colon (\scrH^n,d_\wg) \to \RR$ with $\int f dp^n = 0$ we have
		\[ \int e^{n \lambda f} \, dp^n \leq e^{\lambda^2 \sum_{i=1}^n \wg(i)^2 /8}. \]
\end{prop}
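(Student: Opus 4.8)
The plan is to run the standard martingale (Azuma--Hoeffding) argument that underlies McDiarmid's inequality, carrying the weights through and using Lemma~\ref{lem:hoeffding} for the one-step exponential bound. Let $X = (X_1, \ldots, X_n)$ have law $p^n$. For $0 \le i \le n$ define
	\[ g_i(x_1, \ldots, x_i) = \int_{\scrH^{n-i}} f(x_1, \ldots, x_i, y_{i+1}, \ldots, y_n) \prod_{j=i+1}^n p(dy_j), \]
so that $g_0 = \int f\, dp^n = 0$ and $g_n = f$. Set $F_i = g_i(X_1, \ldots, X_i)$ and $D_i = F_i - F_{i-1}$, so that $n\lambda f(X) = n\lambda \sum_{i=1}^n D_i$. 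Because $p^n$ is a product measure, $F_i = \EE[f(X) \mid X_1, \ldots, X_i]$ by Fubini, so $(D_i)$ is a sequence of martingale differences; in particular, conditionally on $X_1, \ldots, X_{i-1}$, the quantity $D_i = g_i(X_1,\ldots,X_{i-1},X_i) - g_{i-1}(X_1,\ldots,X_{i-1})$ is a function of $X_i$ alone with conditional mean $0$.

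The one substantive input is a weighted bounded-difference estimate. Fix $x_1, \ldots, x_{i-1}$ and let $x_i, x_i' \in \scrH$. The points $(x_1, \ldots, x_{i-1}, x_i, y_{i+1}, \ldots, y_n)$ and $(x_1, \ldots, x_{i-1}, x_i', y_{i+1}, \ldots, y_n)$ differ only in coordinate $i$, so their $d_\wg$-distance is $\tfrac{1}{n}\wg(i)\, d(x_i, x_i') \le \tfrac{1}{n}\wg(i)$ (using $\diam \scrH \le 1$). Since $f$ is $1$-Lipschitz for $d_\wg$, taking the difference inside the integral defining $g_i$ gives $\abs{g_i(x_1, \ldots, x_{i-1}, x_i) - g_i(x_1, \ldots, x_{i-1}, x_i')} \le \wg(i)/n$. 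Hence, conditionally on $X_1, \ldots, X_{i-1}$, the oscillation of $D_i$ as a function of $X_i \sim p$ is at most $\wg(i)/n$.

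Now apply Lemma~\ref{lem:hoeffding} on the probability space $(\scrH, p)$ to this conditional function of $X_i$, with bound $L = \wg(i)/n$ and exponent $n\lambda \ge 0$, to get
	\[ \EE\big[ e^{n\lambda D_i} \mid X_1, \ldots, X_{i-1} \big] \le \exp\!\Big( \tfrac{(n\lambda)^2 (\wg(i)/n)^2}{8} \Big) = \exp\!\Big( \tfrac{\lambda^2 \wg(i)^2}{8} \Big). \]
Conditioning successively on $X_1, \ldots, X_{n-1}$, then $X_1, \ldots, X_{n-2}$, and so on down to the empty conditioning, and using the tower property, one peels off a factor $\exp(\lambda^2 \wg(i)^2/8)$ at each stage, yielding
	\[ \int e^{n\lambda f}\, dp^n = \EE\Big[ e^{n\lambda \sum_{i=1}^n D_i} \Big] \le \prod_{i=1}^n \exp\!\Big( \tfrac{\lambda^2 \wg(i)^2}{8} \Big) = \exp\!\Big( \tfrac{\lambda^2}{8} \sum_{i=1}^n \wg(i)^2 \Big), \]
which is the claimed bound.

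The only place that needs care is the measure-theoretic bookkeeping on the Polish space $\scrH$ — checking that the conditional expectations are represented by the functions $g_i$ and that Fubini's theorem applies to peel off coordinates one at a time — but since $p^n$ is a product measure this is routine and requires nothing beyond the product structure. I do not expect a genuine obstacle here; this proposition is essentially the weighted form of McDiarmid's inequality and is included only because the specific inhomogeneous version is needed later.
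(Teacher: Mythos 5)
Your proof is correct and is essentially the paper's argument: the paper proves this by induction on $n$, peeling off the last coordinate via $g(y)=\int f(y,x)\,dp(x)$ and applying Hoeffding's lemma to the conditional oscillation $\wg(i)/n$, which is exactly your Doob-martingale decomposition unrolled. The key estimates (the weighted bounded-difference bound and the conditional application of Lemma~\ref{lem:hoeffding} with $L=\wg(i)/n$) coincide with the paper's.
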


\begin{proof}
	We induct on $n$.
	
	The case $n=1$ follows from Hoeffding's Lemma (Lemma \ref{lem:hoeffding}).
	
	For the inductive step, assume the result for $n-1$. Let $g \colon \scrH^{n-1} \to \RR$ be given by
		\[ g(y) = \int_\scrH f(y,x)\, dp(x). \]
	Then, using that $f$ is 1-Lipschitz on $\scrH^n$,
		\[ \abs{g(y) - g(y')} \leq \int_\scrH \abs{f(y,x)- f(y',x)}\, dp(x) \leq \frac{1}{n} \sum_{i = 1}^{n-1} \wg(i)\, d(y_i,y'_i) = \frac{n-1}{n} d_{\{\wg(1), \ldots, \wg(n-1)\}}(y,y'). \]
	Setting $L = \frac{n-1}{n}$, this shows that $g/L$ is 1-Lipschitz on its domain $(\scrH^{n-1}, d_{\{\wg(1),\ldots,\wg(n-1)\}})$. By the inductive hypothesis,
		\[ \int e^{n \lambda g} \, dp^{n-1} = \int d^{(n-1) \lambda (g/L)} \, dp^{n-1} \leq e^{\lambda^2 \sum_{i=1}^{n-1} \wg(i)^2 /8}. \]
	Finally, using the above and the case $n=1$ (noting that for each $y \in \scrH^{n-1}$ the function $x \mapsto \frac{n}{\wg(n)} (f(y,x) -g(y))$ is 1-Lipschitz with mean zero),
	\begin{align*}
		\int e^{n \lambda f} \, dp^n &= \int \left[ \int e^{n \lambda(f(y,x) - g(y))}\, dp(x)\right] e^{n \lambda g(y)}\, dp^{n-1}(y) \\
		&\leq \int \left[e^{\lambda^2 \wg(n)^2 /8} \right] e^{n \lambda g(y)}\, dp^{n-1}(y) \\
		&\leq e^{\lambda^2 \sum_{i=1}^n \wg(i)^2/8}. \qedhere
	\end{align*}
\end{proof}

\section{Proof of Theorem \ref{thm:main}}
Let $T$ be a fixed finite tree of depth $r$, and for $0 \leq k \leq r$ let $T_k$ be the subtree induced by $V_k$, the set of vertices of distance at most $k$ from the root. Let $L_k$ denote the leaves of $T_k$, i.e. the vertices of $T$ of distance exactly $k$ from the root.
	Throughout, $\delta$ refers to the descendant generating function of the original tree $T$.
	
	We prove the following statement by induction on $k$:
	\begin{quote}
		For every $k$, if $\nu$ is a $b$-Lipschitz $\scrH$-valued Markov measure indexed by $T_k$ and $f \colon \scrH^{V_k} \to \RR$ is 1-Lipschitz with respect to the Hamming metric with weights $\wg \colon V_k \to \RR$ given by
		\[ \wg(v) = \left\{ \begin{array}{ll} \delta(v), & v \in L_k \\ 1, & v \not\in L_k , \end{array} \right. \]
	then
		\[ \int e^{\abs{V_k} \lambda f}\, d\nu \leq e^{\lambda^2 \sum_{v \in V_k} \delta(v)^2/8}. \]
	\end{quote}
	Note that for the final case $k=r$ we have $\wg \equiv 1$, so $d_\wg$ is the standard (unweighted) Hamming metric.
	
	The base case $k=0$ follows from Hoeffding's Lemma (Lemma \ref{lem:hoeffding}): assuming $f \colon \{\rootvert\} \to \RR$ is $\delta(\rootvert)$-Lipschitz, we get
		\[ \int e^{1 \lambda f} \, d\nu_\rootvert \leq e^{\lambda^2 \delta(\rootvert)^2/8} . \]
	
	For the inductive step, assume that $1 \leq k \leq r$ and that the result holds for $T_{k-1}$. Considering $\scrH^{V_k} \cong \scrH^{V_{k-1}} \times \scrH^{L_k}$, define $g \colon \scrH^{V_{k-1}} \to \RR$ by
		\[ g(y) = \int_{\scrH^{L_k}} f(y, x) \prod_{w\in L_k} q_{w}(dx_w | y_v). \]
	Letting $\nu_{V_{k-1}}$ denote the marginal of $\nu$ on $V_{k-1}$, note that $\int g\, d\nu_{V_{k-1}} = \int f\, d\nu = 0$.
	
	We now consider whether $g$ is a Hamming Lipschitz function. Let $\wg \colon V_k \to \RR$ be as defined above and let $\wg\vert_{L_k}$ denote its restriction to $L_k$. Then, by Monge-Kantorovich-Rubinstein duality, since for each fixed $y \in \scrH^{V_{k-1}}$ the function $x \mapsto \frac{n}{\abs{L_k}} f(y,x)$ is 1-Lipschitz from $(\scrH^{L_k}, d_{\wg\vert_{L_k}})$ to $\RR$, for each $y,y' \in \scrH^{V_{k-1}}$ we have
	\begin{align*}
		\abs{g(y) - g(y')} &= \abs*{\int_{\scrH^{L_k}} f(y, x) \prod_{w \in L_k} q_{w}(dx_w | y_{\pi(w)}) - \int_{\scrH^{L_k}} f(y', x) \prod_{w \in L_k} q_{w}(dx_w | y'_{\pi(w)}) } \\[0.3cm]
		&\leq \abs*{\int_{\scrH^{L_k}} f(y, x) \prod_{w \in L_k} q_{w}(dx_w | y_{\pi(w)}) - \int_{\scrH^{L_k}} f(y, x) \prod_{w \in L_k} q_{w}(dx_w | y'_{\pi(w)}) } \\
		&\qquad + \abs*{\int_{\scrH^{L_k}} \big[ f(y, x) - f(y', x) \big] \prod_{w \in L_k} q_{w}(dx_w | y'_{\pi(w)})} \\[0.3cm]
		&\leq \frac{\abs{L_k}}{n} \overline{d_{\wg\vert_{L_k}}} \left( \bigtimes_{w \in L_k} q_{w}(\cdot | y_{\pi(w)}) ,\  \bigtimes_{w \in L_k} q_{w}(\cdot | y'_{\pi(w)}) \right) \\
		&\qquad + \int_{\scrH^{L_k}} \abs*{ f(y, x) - f(y', x) } \prod_{w \in L_k} q_{w}(dx_w | y'_{\pi(w)}) \\[0.3cm]
		&= \frac{1}{n} \sum_{w\in L_k} \wg(w) \bar{d} \big(  q_{w}(\cdot | y_{\pi(w)}) ,\ q_{w}(\cdot | y'_{\pi(w)}) \big) \\
		&\qquad + \int_{\scrH^{L_k}} \abs*{ f(y, x) - f(y', x) } \prod_{w \in L_k} q_{w}(dx_w | y'_{\pi(w)})
	\end{align*}
	where the last equality uses Lemma \ref{lem:dbarproduct} above. We can bound the integrand of the second term using the Lipschitz assumption on the function $f\colon \scrH^{V_k} \to \RR$; in particular $f$ is $\frac{\delta(v)}{n}$-Lipschitz on each vertex $v \in L_k$ and $\frac{1}{n}$-Lipschitz on $V_k \setminus L_k = V_{k-1}$. Substituting also $\wg(w) = \delta(w)$ for $w \in L_k$, this gives
	\begin{align*}
		\abs{g(y) - g(y')}  &\leq \frac{1}{n} \sum_{w \in L_k} \delta(w) \bar{d} \big(  q_{w}(\cdot | y_{\pi(w)}) ,\ q_{w}(\cdot | y'_{\pi(w)}) \big) + \frac{1}{n}\sum_{v \in V_{k-1}} d(y_v, y'_v) .
	\end{align*}
	Now using the Lipschitz assumption on the Markov kernels, the first term is bounded by $\frac{1}{n}\sum_{w \in L_k} \delta(w) b\, d(y_{\pi(w)}, y'_{\pi(w)})$. If we write this sum as a double sum, grouping vertices $w$ with the same parent $v \in L_{k-1}$, we get
	\begin{align*}
		\abs{g(y) - g(y')} &\leq \frac{1}{n} \sum_{v \in L_{k-1}} \sum_{w \st \pi(w) = v} \delta(w) b\, d(y_v, y'_v) + \frac{1}{n}\sum_{w \in V_{k-1}} d(y_w, y'_w)  \\
		&= \frac{1}{n} \sum_{v \in L_{k-1}} \left(1 + b\sum_{w \st \pi(w) = v} \delta(w) \right) d(y_v, y'_v) + \frac{1}{n}\sum_{v \in V_{k-1}\setminus L_{k-1}} d(y_v, y'_v) \\
		&= \frac{1}{n} \sum_{v \in L_{k-1}} \delta(v) d(y_v, y'_v) + \frac{1}{n}\sum_{v \in V_{k-1}\setminus L_{k-1}} d(y_v, y'_v)
	\end{align*}
	Therefore if we let $L = \frac{\abs{V_{k-1}}}{n}$ we can apply the inductive hypothesis to $g/L$: Letting $\nu_{V_{k-1}}$ be the marginal of $\nu$ on $V_{k-1}$, which is a Markov measure indexed by $T_{k-1}$,
		\[ \int e^{n \lambda g} \, d\nu_{V_{k-1}} = \int e^{\abs{V_{k-1}} \lambda [g/L]} \, d\nu_{V_{k-1}} \leq e^{\lambda^2 \sum_{w \in V_{k-1}} \delta(w)^2 /8}. \]
	
	To finish the proof, for fixed $y \in \scrH^{V_{k-1}}$ we apply Proposition \ref{prop:inhomog} to the 1-Lipschitz, expectation-zero function $(\scrH^{L_k},d_{\wg\vert_{L_k}}) \ni x \mapsto \frac{n}{\abs{L_k}}[f(y,x) - g(y)]$. By definition of the Markov measure $\nu$ we get
	\begin{align*}
		\int_{\scrH^{V_k}} e^{n \lambda f} \, d\nu &= \int_{\scrH^{V_{k-1}}} \int_{\scrH^{L_k}} e^{n \lambda f(y,x)} \prod_{w \in L_k} q_{w}(dx_w | y_{\pi(w)})\, \nu_{V_{k-1}}(dy) \\
		&= \int \left[ \int e^{\abs{L_k} \lambda \cdot \frac{n}{\abs{L_k}}[f(y,x) - g(y)]}\, \prod_{w \in L_k} q_{w}(dx_w | y_{\pi(w)}) \right] e^{n\lambda g(y)} \, \nu_{V_{k-1}}(dy) \\
		&\leq \int \left[ e^{\lambda^2 \sum_{w \in L_k} \delta(w)^2 /8} \right] e^{n\lambda g(y)} \, \nu_{V_{k-1}}(dy) \\
		&\leq \left[ e^{\lambda^2 \sum_{w \in L_k} \delta(w)^2 /8} \right] e^{\lambda^2 \sum_{w \in V_{k-1}} \delta(w)^2 /8} \\
		&= e^{\lambda^2 \sum_{w \in V_k} \delta(w)^2 /8}. \qedhere
	\end{align*}

\section{Proofs of Concentration Results}

\subsection{Corollary \ref{cor:conc}}
Theorem \ref{thm:main} combined with a standard application of the exponential moment method gives that 
each Markov measure $\nu_k$ on $\scrH^{V_k}$ satisfies, for any $\varepsilon>0$ and $f \in \Lip_1(\scrH^{V_k} \big)$,
		\[ \sup \left\{ \nu_k \big\{ \abs[\big]{f - \int f\, d\nu_k} > \varepsilon \big\} \st f \in \Lip_1 \big( \scrH^{V_k} \big) \right\} \leq 2 e^{-2 \abs{V_k}^2 \varepsilon^2 / \Delta_k^2} . \]
	If $\Delta_k = o(\abs{V_k})$ then the right-hand side goes to zero as $\abs{V_k} \to \infty$, and $\Delta_k = O(\sqrt{\abs{V_k}})$ will ensure that it does so exponentially fast. \\
	
\subsection{Theorem \ref{thm:growthrate}}
\label{subsec:thmgrowthratepf}
The first part uses the upper bound in Proposition \ref{prop:Deltabound}:  Suppose every vertex has degree at most $d$. For each $r \in \NN$, let $C_r$ be the number of vertices in the ball of radius $r$ centered at a vertex in the infinite $d$-regular tree. Then for any $w \in V_k$ we have $\abs{B_r(w)} \leq C_r$, so since $b<1$
	\begin{align*}
	\frac{1}{\abs{V_k}^2} \sum_{(w_1,w_2) \in V_k^2} b^{d(w_1,w_2)} &= \frac{1}{\abs{V_k}^2}\sum_{w_1 \in V_k}\left( \sum_{w_2 \in B_r(w_1)} b^{d(w_1,w_2)} + \sum_{w_2 \not\in B_r(w_1)} b^{d(w_1,w_2)} \right) \\
	&\leq  \frac{1}{\abs{V_k}^2}\sum_{w_1 \in V_k}\left( 1 \cdot C_r + b^r \cdot \abs{V_k} \right) \\
	&= \frac{C_r}{\abs{V_k}} + b^r .
\end{align*}
Since we have assumed that the full tree $T$ is infinite, we have $\lim_{k \to \infty} \abs{V_k} = \infty$ and hence
	\[ \limsup_{k \to \infty} \frac{1}{\abs{V_k}^2} \sum_{(w_1,w_2) \in V_k^2} b^{d(w_1,w_2)} \leq b^r. \]
Since $r$ was arbitrary and $b<1$,
	\[ \lim_{k \to \infty} \frac{1}{\abs{V_k}^2} \sum_{(w_1,w_2) \in V_k^2} b^{d(w_1,w_2)} = 0. \]
This shows that the right-hand side of Proposition \ref{prop:Deltabound} is $o(\abs{V_k}^2)$, which implies that $\Delta_k = o(\abs{V_k})$. \\

Now suppose $b^2 \maxgr T < 1$; we show that $\Delta_k = O(\sqrt{\abs{V_k}})$.
By Lemma \ref{lem:deltaQ},
		\[ \Delta_k = \norm*{ \left( \sum_{j=0}^k (bQ)^j \right) \1_{V_k}}_2 , \]
	so by the triangle inequality and definition of the operator norm
		\[ \frac{\Delta_k}{\sqrt{\abs{V_k}}} = \norm*{\left( \sum_{j=0}^k (bQ)^j \right) \frac{\1_{V_k} }{\sqrt{\abs{V_k}}} }_2 \leq \sum_{j=0}^k b^j \norm{Q^j}_2 . \tag{$\dagger$} \]
	Proposition \ref{prop:Qopnorm} implies that
		\[ b \limsup_{j \to \infty} \norm{Q^j}_2^{1/j}  = \sqrt{b^2 \maxgr T} < 1 , \]
	so the series on the right converges as $k \to \infty$, and hence $\Delta_k = O(\sqrt{\abs{V_k}})$.
	
It seems possible that one could be able to replace $\maxgr T$ in this result by some smaller quantity, maybe even $\bargr T$, through more careful analysis. Specifically, using the $\ell^2$ operator norm in ($\dagger$) may not be optimal since we only need to bound functions of the form $Q^j \1_{V_k}$. See Figure \ref{fig:31growthrate} and the relevant discussion in the introduction, however, for some evidence that $\maxgr T$ is actually appropriate. 

One might also ask whether the application of the triangle inequality in ($\dagger$) shares some blame for the appearance of $\maxgr T$ rather than some smaller quantity, but the following shows that this is the best we can hope for using the operator norm:
\begin{prop}
\label{prop:opnormbad}
	If $b^2 \maxgr T > 1$, then $\lim_{k \to \infty} \norm*{\sum_{j=0}^k (bQ)^j}_2 = \infty$.
\end{prop}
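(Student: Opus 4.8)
The plan is to exploit the fact that, in the standard basis of $\ell^2(V)$, every matrix entry of $Q$ equals $0$ or $1$; in particular $Q$ maps nonnegative functions to nonnegative functions. Consequently, when $b \geq 0$ the partial sum $\sum_{i=0}^k (bQ)^i$ cannot exhibit any cancellation on a nonnegative test function, so to show its operator norm blows up it suffices to make a \emph{single} summand $(bQ)^j$ large on a well-chosen unit vector. This is exactly the situation addressed by the computation in the proof of Proposition \ref{prop:Qopnorm}.

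Concretely, I would proceed as follows. If $b=0$ the hypothesis is vacuous, so assume $b>0$ and pick $\beta$ with $1<\beta<b^2\maxgr T$. By the definition of $\maxgr T$ as a $\limsup$, there are infinitely many integers $j$ with $\max_{v\in V}\abs{D_j(v)}^{1/j}>\beta/b^2$, i.e.\ with $b^{2j}\abs{D_j(v_j)}>\beta^j$, where $v_j$ attains the maximum. For such a $j$, take the unit vector $f_j\coloneqq\abs{D_j(v_j)}^{-1/2}\1_{D_j(v_j)}\in\ell^2(V)$; the computation in Proposition \ref{prop:Qopnorm} gives $Q^jf_j=\sqrt{\abs{D_j(v_j)}}\,\1_{\{v_j\}}$, hence $\norm{(bQ)^jf_j}_2=b^j\sqrt{\abs{D_j(v_j)}}>\beta^{j/2}$.

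Now fix any $k\geq j$. Since $f_j\geq 0$ and each $(bQ)^i$ preserves nonnegativity, every summand $(bQ)^if_j$ is nonnegative, so $\sum_{i=0}^k(bQ)^if_j\geq(bQ)^jf_j\geq 0$ pointwise on $V$; dominance of nonnegative functions then passes to the $\ell^2$ norm, giving $\norm*{\sum_{i=0}^k(bQ)^if_j}_2\geq\norm{(bQ)^jf_j}_2>\beta^{j/2}$. As $f_j$ is a unit vector this yields $\norm*{\sum_{i=0}^k(bQ)^i}_2>\beta^{j/2}$ for all $k\geq j$. Finally, given any $M>0$, choosing (from the infinitely many available) an index $j$ with $\beta^{j/2}>M$ shows $\norm*{\sum_{i=0}^k(bQ)^i}_2>M$ for all $k\geq j$, so $\lim_{k\to\infty}\norm*{\sum_{j=0}^k(bQ)^j}_2=\infty$.

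The only step with any content is the nonnegativity/no-cancellation observation in the third paragraph; the rest is bookkeeping with the definition of $\maxgr T$ and the norm computation already recorded in Proposition \ref{prop:Qopnorm}, so I do not expect a real obstacle here.
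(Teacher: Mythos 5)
Your proof is correct and follows essentially the same route as the paper's: both extract arbitrarily large $j$ with $b^{2j}\max_v\abs{D_j(v)}$ large and lower-bound the partial sum by the single term of index $j$ acting on the extremal vectors identified in Proposition \ref{prop:Qopnorm}. The only difference is cosmetic: the paper applies $\sum_j (bQ^*)^j$ to $\1_{\{v\}}$ and uses that the resulting terms $b^j\1_{D_j(v)}$ have pairwise disjoint supports, whereas you apply $\sum_j (bQ)^j$ to the normalized indicator of $D_j(v)$ and use positivity of $Q$ to discard the other summands; both devices are valid.
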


\begin{proof}
	Pick $\varepsilon>0$ small enough that $b^2 (\maxgr T - \varepsilon) > 1$. By definition of $\maxgr T$, there exist arbitrarily large $R \in \NN$ such that $\max_{v \in V} \abs{D_R(v)}^{1/R} > \maxgr T - \varepsilon$.
	
	For some such $R$, pick $v \in V$ such that $\abs{D_R(v)}^{1/R} > \maxgr T - \varepsilon$. Then for all $k \geq R$
		\[ \norm*{\sum_{j=0}^k (bQ^*)^j \1_{\{v\}} }_2 = \norm*{\sum_{j=0}^k b^j \1_{D_j(v)} }_2 \geq (b^{2R} \abs{D_R(v)})^{1/2}  > \big( b^2 (\maxgr T - \varepsilon) \big)^{R/2}. \]
	In particular,
		\[ \norm*{\sum_{j=0}^k (bQ)^j}_2 = \norm*{\sum_{j=0}^k (bQ^*)^j}_2 > \big( b^2 (\maxgr T - \varepsilon) \big)^{R/2} \]
	for all $k \geq R$ so that
		\[ \liminf_{k \to \infty} \norm*{\sum_{j=0}^k (bQ)^j}_2 \geq \big( b^2 (\maxgr T - \varepsilon) \big)^{R/2} . \]
	Since this holds for arbitrarily large $R$ and $b^2 (\maxgr T - \varepsilon) > 1$, we get the desired result.
\end{proof}
	
For the final part of Theorem \ref{thm:growthrate}, suppose $\Delta_k = O(\sqrt{\abs{V_k}})$; we show that $b^2 \bargr T \leq 1$.
For any $v \in V_k$, using that $T_k$ has diameter at most $2k$ we have
	\[ \frac{1}{\abs{V_k}} \sum_{u \in V_k} b^{d(u,v)} \geq b^{2k} . \]
Therefore
	\[ \frac{\Delta_k^2}{\abs{V_k}} \geq \sum_{v \in V_k} \frac{1}{\abs{V_k}} \sum_{u \in V_k} b^{d(u,v)} \geq b^{2k} \abs{V_k} , \]
Since $\Delta_k = O(\sqrt{\abs{V_k}})$ the previous inequality implies $b^{2} \abs{V_k}^{1/k} \leq C^{1/k}$ for large $k$, so that
	\[ b^2 \limsup_{k \to \infty} \abs{V_k}^{1/k} \leq 1. \]
By Proposition \ref{prop:altgrowthformula}, this completes the proof.

\section{Ising model and Optimality of Theorem \ref{thm:main}}
\label{sec:ising}
Let $T$ be a finite tree, and let $p \in (0,1/2]$. Consider the Markov measure $\nu$ on $\{0,1\}^V$ with uniform distribution at the root and transition matrix
	\[ P = \begin{pmatrix}
		P_{00} & P_{01} \\
		P_{10} & P_{11}
	\end{pmatrix} 
		=  \begin{pmatrix}
		1-p & p \\
		p & 1-p
	\end{pmatrix}, \]
where $P_{ji}$ denotes the probability of moving to state $i$ given that the current state is $j$, and stationary root distribution. The matrix $P$ defines a probability kernel $\kappa \colon \{0,1\} \to \Prob(\{0,1\})$ by setting $\kappa(\{i\} | j) = P_{ji}$ for $i,j \in \{0,1\}$. We take this to be the kernel at each nonroot vertex, and call this the Ising model with flip probability $p$.

The Ising model is often defined instead by defining the energy function $H \colon \{0,1\}^V \to \RR$ by
	\[ H(\sigma) = - \sum_{v,w \in V} J_{v,w} \1_{\sigma_v \ne \sigma_w} \]
(the sum is over unordered pairs) and setting $\PP(\sigma) = \frac{1}{Z} e^{-H(\sigma)}$. The quantities $J_{v,w}$ are called interaction strengths, and to match the above definition we should take 
	\[ J_{v,w} = \left\{ \begin{array}{ll} \arctanh (1-2p), & \{v,w\} \in E, \\ 0, & \text{else.} \end{array} \right. \]
	
This model is also studied with non-uniform interaction strength and with an extra contribution to $H$ called an external field. Theorem \ref{thm:main} also applies to such models, but since the goal of this section is to study the optimality of Theorem \ref{thm:main} via a model for which exact calculations are possible we restrict to the special case defined above.

Note that the uniform distribution is stationary and that by diagonalizing $P$ we can get the formula
	\[ P^n = \frac{1}{2} \begin{pmatrix}
		1+(1-2p)^n & 1-(1-2p)^n \\
		1-(1-2p)^n & 1+(1-2p)^n
	\end{pmatrix} \]
which gives the $n$-step transition probabilities.
Note also that the transition kernel $q$ has Lipschitz constant
	\[ b = \max_{x,x' \in \{0,1\}} \bar{d}\left(P(x,\cdot),P(x', \cdot)\right) = 1-2p \in [0,1) . \]
The restriction $p \leq 1/2$ ensures that we don't have to take an absolute value here, which is convenient below.

The Lipschitz constant $b$ coincides with the second-largest eigenvalue of the transition kernel, so for subperiodic trees the location of the phase transition we establish here coincides with the reconstruction threshold (see for example the survey \cite{peres2003}).
	
In this section we use probabilistic notation, letting $X$ to be a $\{0,1\}^V$-valued random variable on some probability space $(\Omega,\PP)$ with law $\nu = X_*\PP$. We write $\EE g(X) \coloneqq \int_\Omega g(X)\, d\PP$ for any measurable function $g \colon \{0,1\}^V \to \RR$. For $v \in V$, the spin at $v$ is the $v$ coordinate of $X$, which we denote $X_v$.

Let $f \colon \{0,1\}^V \to \RR$ be the 1-Lipschitz function which gives the density of ones,
	\[ f(x) = \frac{1}{\abs{V}} \# \{ v \in V \st x_v = 1\} = \frac{1}{\abs{V}} \sum_{v \in V} \1_{\{x_v = 1\}}. \]
This is often called the magnetization of $x$. By stationarity of the uniform distribution,
	\[ \EE f(X) = \tfrac{1}{2}. \]
	
The above results give sufficient conditions for $f(X)$ to concentrate around its mean along a sequence of trees; here we compare those results to what we can get by controlling the second moment.

\begin{prop}
If $f \colon \{0,1\}^V \to \RR$ is the density of ones function and the law of $X$ is the Ising model on $T$ with $p \in (0,1/2]$, then
	\[ \Var f(X) = \frac{1}{4\abs{V}^2} \sum_{(v,w) \in V^2} b^{d(v,w)}, \]
where $b = 1-2p$.
\end{prop}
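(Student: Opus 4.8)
The plan is to expand the variance over the coordinate indicators and reduce the whole computation to the two-point correlations $\PP(X_v = 1,\ X_w = 1)$. Writing $f(X) = \frac{1}{\abs{V}} \sum_{v \in V} \1_{\{X_v = 1\}}$ and using bilinearity of covariance,
\[ \Var f(X) = \frac{1}{\abs{V}^2} \sum_{(v,w) \in V^2} \left( \PP(X_v = 1,\ X_w = 1) - \PP(X_v = 1)\,\PP(X_w = 1) \right) . \]
Since the uniform distribution is stationary, each marginal $X_v$ is $\Unif(\{0,1\})$, so $\PP(X_v = 1)\,\PP(X_w = 1) = \tfrac14$ for every pair. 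Thus it suffices to show $\PP(X_v = 1,\ X_w = 1) = \tfrac14\big(1 + b^{d(v,w)}\big)$, since then each summand equals $\tfrac14 b^{d(v,w)}$ and the claimed formula follows by dividing by $\abs{V}^2$.

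The key step is to identify the joint law of the pair $(X_v, X_w)$. Let $u = v \wedge w$ be their meet and set $a = d(u,v)$, $c = d(u,w)$, so $d(v,w) = a + c$. Because $u$ is exactly the vertex where the paths from $\rootvert$ to $v$ and to $w$ diverge, the sites $v$ and $w$ lie in subtrees hanging off distinct children of $u$ (with the degenerate cases $v = u$ or $w = u$ handled trivially); hence by the tree-Markov property $X_v$ and $X_w$ are conditionally independent given $X_u$, and $X_v$ (resp.\ $X_w$) is obtained by running the chain $a$ (resp.\ $c$) steps from $X_u$. Therefore, with $P$ as in the definition of the Ising model,
\[ \PP(X_v = 1,\ X_w = 1) = \sum_{s \in \{0,1\}} \PP(X_u = s)\, (P^a)_{s1}\, (P^c)_{s1} = \tfrac12 \sum_{s \in \{0,1\}} (P^a)_{s1}\, (P^c)_{s1} . \]
Now $P$ is symmetric, so $P^a$ is symmetric and $(P^a)_{s1} = (P^a)_{1s}$; the Chapman--Kolmogorov identity then gives $\sum_{s} (P^a)_{1s}(P^c)_{s1} = (P^{a+c})_{11} = (P^{d(v,w)})_{11}$, whence $\PP(X_v = 1,\ X_w = 1) = \tfrac12 (P^{d(v,w)})_{11}$.

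Finally I plug in the explicit formula for $P^n$ recorded above: its diagonal entries are $\tfrac12\big(1 + (1-2p)^n\big)$, so with $b = 1-2p$ we get $(P^{d(v,w)})_{11} = \tfrac12\big(1 + b^{d(v,w)}\big)$ and hence $\PP(X_v = 1,\ X_w = 1) = \tfrac14\big(1 + b^{d(v,w)}\big)$. Combining with the first paragraph completes the proof. The only mildly delicate point is the reduction of the two-point function to a single power $P^{d(v,w)}$; this rests on reversibility (symmetry of $P$) together with the tree-Markov conditional independence across the meet, and everything else is bookkeeping. Note for later use that, combined with Proposition~\ref{prop:Deltabound}, this yields $\Var f(X) \le \frac{\Delta^2}{4\abs{V}^2}$, matching the scale of the concentration bound in Theorem~\ref{thm:main}.
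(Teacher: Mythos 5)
Your proposal is correct and follows essentially the same route as the paper: expand the variance into two-point correlations, use conditional independence of $X_v$ and $X_w$ given the spin at their meet, and evaluate with the explicit formula for $P^n$. The only cosmetic difference is that you collapse the sum over the meet's spin via symmetry of $P$ and Chapman--Kolmogorov, whereas the paper just multiplies out $\tfrac18\big[(1+b^{h_1})(1+b^{h_2})+(1-b^{h_1})(1-b^{h_2})\big]$; both yield $\tfrac14\big(1+b^{d(v,w)}\big)$.
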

\begin{proof}
We write the variance as
	\[ \Var f(X) \coloneqq \EE[f(X)^2] - [\EE f(X)]^2 = \frac{1}{\abs{V}^2} \sum_{v \in V} \sum_{w \in V} \PP(X_v = X_w = 1) - \frac{1}{4}. \]
	
Given distinct vertices $v,w \in V$, let $a = v \wedge w$ be their most recent common ancestor, and let $h_1 = d(v,a)$ and $h_2 = d(w,a)$. Then, since the spins at $v$ and $w$ are conditionally independent given $X_a$,
\begin{align*}
	\PP(X_v = X_w = 1) &= \PP(X_a = 1) \PP(X_v = 1 | X_a = 1) \PP(X_w = 1 | X_a = 1) \\
	&\qquad + \PP(X_a = 1) \PP(X_v = 1 | X_a = 0) \PP(X_w = 1 | X_a = 0) \\[0.2cm]
	&= \frac{1}{8}\left[(1+b^{h_1})(1+b^{h_2}) + (1-b^{h_1})(1-b^{h_2}) \right] \\
	&= \frac{1}{4} \big(1 + b^{h_1 + h_2} \big) \\
	&= \frac{1}{4} \big(1 + b^{d(v,w)} \big) .
\end{align*}
Inserting this expression into the above formula finishes the proof.
\end{proof}

Suppose $T$ is a fixed infinite tree of bounded degree, and for each $k$ denote the depth $k$ subtree by $T_k$.
Let $X^k$ be a random variable whose law is the Ising model on $T_k$ with flip probability $p$. By the previous proposition and the same argument as in the proof of the first part of Theorem \ref{thm:growthrate} (Section \ref{subsec:thmgrowthratepf}) we get
	\[ \lim_{k \to \infty} \Var f(X^k) = 0. \]

The preceding fact can be deduced from the above results on concentration. One new application of our variance calculation is Theorem \ref{thm:optimality}:

\subsection{Proof of Theorem \ref{thm:optimality}}
	The equivalence of the two inequalities follows from the Bobkov-G\"otze equivalence. The exponential moment bound implies
		\[ \sup \left\{ \nu \{\abs{f - \int f\, d\nu} > \varepsilon \} \st f \in \Lip_1 \big( \scrH^V \big) \right\} \leq 2 e^{-2\varepsilon^2 n^2/C^2} \quad \forall \varepsilon > 0. \]
	
	In particular, 
	\begin{align*}
		\Var f &= \int \abs{f - \int f\, d\nu}^2\, d\nu \\
		&= \int_0^\infty 2t\, \nu\big\{ \abs{f - \int f\, d\nu} > t \big\}\, dt \\
		&\leq \int_0^\infty 4t e^{-2t^2n^2/C^2}\, dt \\
		&= \frac{C^2}{n^2}.
	\end{align*}
	Applying this to the density of ones function on the Ising model as defined in the previous section, we see that
		\[ C^2 \geq \frac{1}{4} \sum_{(v,w) \in V^2} b^{d(v,w)} \geq \frac{1-b^2}{4} \Delta^2 , \]
	or, taking square roots,
		\[ C \geq \Delta \frac{\sqrt{1-b^2}}{2}. \]

\subsection{Proof of Theorem \ref{thm:bsctransition}}
	Let $\nu_k \in \Prob(\{0,1\}^{V_k})$ denote the law of the Ising model on the depth-$k$ subtree $T_k$. Suppose the sequence $\{\nu_k\}$ is a normal L\'evy family, so that there exist constants $c_1,c_2>0$ such that
		\[ \nu_k \{ f > t \} \leq c_1 e^{-c_2 \abs{V_k}t^2} \]
	for any 1-Lipschitz, expectation-zero $f$ and $t>0$. Then, as above,
	\begin{align*}
		\Var_{\nu_k} f &= \int \abs{f - \int f\, d\nu_k}^2\, d\nu_k \\
		&= \int_0^\infty 2t\, \nu_k \big\{ \abs{f} > t \big\}\, dt \\
		&\leq \int_0^\infty 4c_1 t e^{-c_2 \abs{V_k} t^2}\, dt \\
		&= \frac{2c_1}{c_2 \abs{V_k}}.
	\end{align*}
	Applied to the density of ones function on the Ising model, we get
		\[ \frac{2c_1}{c_2 \abs{V_k}} \geq \frac{1}{4 \abs{V_k}^2} \sum_{(v,w) \in V^2} b^{d(v,w)} \geq \frac{1-b^2}{4 \abs{V_k}^2} \Delta_k^2, \]
	so that $\Delta_k^2 = O(\abs{V_k})$.
		
\section{Comparison of Theorem \ref{thm:main} with related work}
\label{sec:comparison}
Kontorovich and Ramanan \cite{kontorovich2008} have proven a concentration inequality for Hamming-Lipschitz functions on finite product spaces whose form is very similar to the tail bound resulting from Theorem \ref{thm:main}; a similar result was independently obtained by Chazottes et al. \cite{chazottes2006}. While these results are in terms of mixing coefficients defined with linear-time processes in mind, in \cite{kontorovich2012} Kontorovich showed how to apply them to Markov measures indexed by finite trees. Below we state these inequalities and compare them with ours in the case of the Ising model on a finite tree.

While they both have the advantage of not requiring a process to have the Markov property, in the following we show that in relation to our Theorem \ref{thm:main}
\begin{enumerate}
	\item Kontorovich and Ramanan's inequality requires a smaller Lipschitz constant (i.e. more contractivity) in order to establish concentration for a sequence of Markov chains on trees
	\item Chazottes et al.'s inequality is sufficient to establish Theorem \ref{thm:growthrate} in the case of the Ising model.
\end{enumerate}

Let $\nu \in \Prob(\scrH^n)$ be the joint distribution of a collection of random variables $(X_1, \ldots, X_n)$ each taking values in a countable discrete metric space $\scrH$. For $1 \leq i<j \leq n$ define the mixing coefficient
	\[ \bar{\eta}_{ij} = \sup_{x_1, \ldots, x_{i-1}, x_i, x_i' \in \scrH} \begin{array}{l} \big\| \PP\big(X_j \in \cdot\ | (X_1, \ldots, X_i) = (x_1, \ldots, x_{i-1}, x_i) \big) \\[0.2em] \qquad- \ \PP\big(X_j \in \cdot\ | (X_1, \ldots, X_i) = (x_1, \ldots, x_{i-1}, x_i') \big) \big\|_{TV} . \end{array} \]
For $1 \leq i \leq n$ we set $\bar\eta_{ii} = 1$, and define $\DeltaKR$ to be the upper-triangular matrix with entries
	\[ (\DeltaKR)_{ij} = \left\{ \begin{array}{ll}
		\bar\eta_{ij} & i \leq j \\
		0 & i > j. \end{array} \right. \]
The main theorem of \cite{kontorovich2008} is that if $f \colon \scrH^n \to \RR$ is 1-Lipschitz with respect to the normalized Hamming metric we have
	\[ \nu \big\{ \abs[\big]{f - \int f\, d\nu} > \varepsilon \big\} \leq 2 e^{- n \varepsilon^2 / 2\norm{\DeltaKR}_{\infty}^2} \tag{KR}\]
where $\norm{\DeltaKR}_{\infty}$ is the $\ell^\infty$ operator norm. 

We need a way to interpret a tree-indexed process as a linear-time process in order to make sense of the mixing coefficients $\bar\eta_{ij}$ in this context. Kontorovich does this in \cite{kontorovich2012} by fixing a breadth-first ordering of the vertices: given a tree $T$ with $n$ vertices, label the vertices as $v_1, \ldots, v_{n}$ such that $v_1$ is the root and if $d(\rootvert, v_i) < d(\rootvert, v_j)$ then $i < j$. If $(X_v)_{v \in V}$ is a process indexed by $T$ we abbreviate $X_j \coloneqq X_{v_j}$.

\begin{prop}
	Let $T$ be a finite rooted tree and let $\nu$ be the joint distribution of the Ising model on $T$ with flip probability $p$. Then $\DeltaKR = \sum_{r=0}^\infty b^r Q^r$, where $Q$ is the adjacency matrix of $T$ directed away from the root as defined in Lemma \ref{lem:deltaQ}.
\end{prop}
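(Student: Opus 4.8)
The plan is to compute every entry $\bar\eta_{ij}$ of $\DeltaKR$ by hand and match it against the corresponding entry of $M := \sum_{r\ge 0} b^r Q^r$. Since $Q$ is the adjacency matrix of $T$ with edges oriented away from the root, $(Q^r)_{ij}$ is the number of directed length-$r$ paths from $v_i$ to $v_j$, which in a tree equals $\1[v_j \in D_r(v_i)]$; hence $(M)_{ij} = b^{d(v_i,v_j)}$ if $v_i$ is an ancestor of $v_j$ (the only contributing index being $r = d(v_i,v_j)$) and $(M)_{ij} = 0$ otherwise. Because $v_i \le v_j$ forces $d(\rootvert,v_i) \le d(\rootvert,v_j)$ and hence $i \le j$ in the breadth-first ordering, $M$ is upper triangular, matching the shape of $\DeltaKR$; so it is enough to show, for $i < j$, that $\bar\eta_{ij} = b^{d(v_i,v_j)}$ when $v_i$ is an ancestor of $v_j$ and $\bar\eta_{ij} = 0$ otherwise, the diagonal being fixed at $\bar\eta_{ii} = 1 = (M)_{ii}$ by definition.

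The key tool is the following consequence of the defining factorization $\nu(\{y\}) = \nu_\rootvert(\{y_\rootvert\}) \prod_{v \ne \rootvert} q_v(\{y_v\} \mid y_{\pi(v)})$: for any vertex $a$, the factors split into those depending only on $y_{V \setminus T_a}$, the single factor $q_a(\{y_a\} \mid y_{\pi(a)})$, and those depending only on $y_{T_a}$. Consequently, conditioning on $X_a = x$ renders $X_{T_a}$ and $X_{V \setminus T_a}$ conditionally independent, and the conditional law of $X_{T_a}$ is exactly the broadcast model on $T_a$ started from $x$ at $a$; since every kernel here equals $\kappa$, this gives $\PP(X_w = 1 \mid X_a = x) = (P^{d(a,w)})_{x,1}$ for every descendant $w$ of $a$. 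I would combine this with the breadth-first structure: with $S_i = \{v_1, \dots, v_i\}$ and $\ell = d(\rootvert, v_i)$, the set $S_i$ contains every vertex of depth $< \ell$ and some of depth $\ell$, so for $j > i$ the last vertex of the root-to-$v_j$ path lying in $S_i$ is the depth-$\ell$ ancestor $a$ of $v_j$ if $a \in S_i$, and is $\pi(a)$ (necessarily in $S_i$, having depth $\ell - 1$) if $a \notin S_i$; call this vertex $u$. Applying the factorization at $a$ when $a \in S_i$ (using $S_i \cap T_a = \{a\}$), and at $a$ again when $a \notin S_i$ (now $S_i \cap T_a = \emptyset$, while $y_{\pi(a)} = x_{\pi(a)}$ is pinned down by $S_i$), one obtains $\PP(X_{v_j} \in \,\cdot \mid X_{S_i} = x_{S_i}) = \PP(X_{v_j} \in \,\cdot \mid X_u = x_u)$; all conditionals are well defined since $p \in (0,1/2]$ makes every configuration positive.

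Feeding this into the definition of $\bar\eta_{ij}$: if $v_i$ is an ancestor of $v_j$ then $v_i$ is precisely the depth-$\ell$ ancestor $a$, so $u = v_i$ and the conditional law of $X_{v_j}$ depends on $x_i$ but not on $x_1, \dots, x_{i-1}$; using the explicit formula for $P^m$ with $m = d(v_i, v_j)$ and $b = 1 - 2p \ge 0$, the total variation distance between the $x_i = 0$ and $x_i = 1$ versions is $\bigl|\tfrac{1 + b^m}{2} - \tfrac{1 - b^m}{2}\bigr| = b^m$, so $\bar\eta_{ij} = b^{d(v_i,v_j)}$. If $v_i$ is not an ancestor of $v_j$, then $u \ne v_i$ and $u$ precedes $v_i$ in the ordering, so for each fixed choice of $x_1, \dots, x_{i-1}$ the conditional law of $X_{v_j}$ is unchanged when $x_i$ varies, giving $\bar\eta_{ij} = 0$. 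This matches $M$ entrywise and proves the proposition.

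The step I expect to be the main obstacle is the bookkeeping around the breadth-first prefix $S_i$, which is ancestor-closed only down to depth $\ell - 1$: one must correctly identify the last $S_i$-vertex $u$ on the root-to-$v_j$ path, separate the cases $a \in S_i$ and $a \notin S_i$, and verify in each that conditioning on the remaining coordinates of $S_i$ really does decouple from $X_{v_j}$. Once the factorization is applied carefully there, the total variation computation and the comparison with $Q^r$ are routine.
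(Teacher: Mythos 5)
Your proof is correct and follows essentially the same route as the paper's: a case split on whether $v_i$ is an ancestor of $v_j$, conditional independence from the tree Markov property to reduce the conditioning to a single vertex, the explicit formula for $P^m$ to get $\bar\eta_{ij}=b^{d(v_i,v_j)}$, and the identification $(Q^r)_{ij}=\1_{v_j\in D_r(v_i)}$. Your treatment of the full breadth-first prefix $S_i$ (identifying the last $S_i$-vertex $u$ on the root-to-$v_j$ path) is in fact more careful than the paper's, which only argues pairwise conditional independence.
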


\begin{cor}
	Let $T$ be an infinite tree, and for each $k$ let $T_k$ be the depth $k$ subtree and let $\Delta_k$, $\delta_k$, $\DeltaKR_k$ correspond to the Ising model on $T_k$ ($\DeltaKR_k$ may be induced by any breadth-first ordering of $V_k$). Then
	\begin{enumerate}
		\item $\norm{\DeltaKR_k}_\infty = \norm{\delta_k}_{\ell^\infty(V_k)}$, where $\delta_k$ is the descendant generating function of $T_k$ with $b = 1-2p$, and
		\item $\Delta_k = \norm{\DeltaKR_k \1_{V_k}}_2$. In particular $\frac{\Delta_k^2}{\abs{V_k}} \leq \norm{\DeltaKR_k}_{2}^2$, where $\norm{\DeltaKR_k}_2$ is the $\ell^2$ operator norm.
	\end{enumerate}
\end{cor}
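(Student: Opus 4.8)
The plan is to read the entries of $\DeltaKR_k$ off the preceding Proposition and then apply the elementary formulas for the $\ell^\infty$ and $\ell^2$ operator norms. Fix a breadth-first ordering $v_1, \dots, v_{\abs{V_k}}$ of $V_k$. By the preceding Proposition, $\DeltaKR_k = \sum_{r \ge 0} b^r Q_k^r$, where $Q_k$ is the adjacency matrix of $T_k$ directed away from the root, so the $(i,j)$ entry of $\DeltaKR_k$ is $\sum_{r \ge 0} b^r (Q_k^r)_{ij}$. Since $(Q_k^r)_{ij}$ is the indicator that $v_j$ is a descendant of $v_i$ in $T_k$ at generation $r$ (this is the identity $\abs{D_r'(v)} = Q_k^r \1_{V_k}(v)$ from the proof of Lemma~\ref{lem:deltaQ}, specialized to a single target vertex), this entry equals $b^{d(v_i,v_j)}$ when $v_i \le v_j$ and $0$ otherwise. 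Consequently the $i$-th row sum of $\DeltaKR_k$ --- all of whose entries are nonnegative --- is $\sum_{r \ge 0} \abs{D_r'(v_i)}\, b^r = \delta_k(v_i)$, the descendant generating function of $T_k$ evaluated at $v_i$ (with $b = 1-2p$). In particular $\DeltaKR_k \1_{V_k} = \delta_k$ as vectors indexed by $V_k$.

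Part (1) then follows because the $\ell^\infty$ operator norm of a matrix equals its largest absolute row sum, which here is $\max_{v \in V_k} \delta_k(v) = \norm{\delta_k}_{\ell^\infty(V_k)}$. For part (2), the identity $\DeltaKR_k \1_{V_k} = \delta_k$ gives $\norm{\DeltaKR_k \1_{V_k}}_2 = \norm{\delta_k}_{\ell^2(V_k)} = \Delta_k$ straight from the definition of $\Delta_k$. Finally, since $\1_{V_k}/\sqrt{\abs{V_k}}$ is a unit vector in $\ell^2(V_k)$,
\[ \frac{\Delta_k^2}{\abs{V_k}} = \norm*{\DeltaKR_k \frac{\1_{V_k}}{\sqrt{\abs{V_k}}}}_2^2 \le \norm{\DeltaKR_k}_2^2 . \]
All of the displayed quantities are invariant under reordering the vertices (the matrix changes only by conjugation by a permutation, and $\DeltaKR_k \1_{V_k}$ only gets permuted), so the choice of breadth-first ordering is immaterial.

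There is no real obstacle here; the only point requiring a little care is to keep $\delta_k$ --- the descendant generating function of the \emph{finite} subtree $T_k$ --- distinct from the descendant generating function $\delta$ of the ambient infinite tree $T$, since a vertex of $T_k$ at depth close to $k$ has strictly fewer descendants inside $T_k$ than inside $T$. Working with the entries of $\DeltaKR_k$ directly (rather than with the ambient operator $Q$) makes this automatic, because $(Q_k^r)_{ij}$ only ever registers vertices of $V_k$.
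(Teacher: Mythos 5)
Your proposal is correct and follows essentially the same route as the paper's (omitted) proof: identify the entries of $\DeltaKR_k$ as $b^{d(v_i,v_j)}\1_{v_i \le v_j}$, take maximal row sums for the $\ell^\infty$ operator norm in part (1), and use the identity $\DeltaKR_k \1_{V_k} = \delta_k$ (the content of Lemma~\ref{lem:deltaQ}) together with $\norm{\1_{V_k}}_2 = \sqrt{\abs{V_k}}$ for part (2). Your closing remark distinguishing $\delta_k$ from the ambient $\delta$ is a worthwhile point of care, and the ordering-invariance observation is correct.
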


The proofs of these statements are not completely trivial but have been omitted for the sake of brevity.

Part (1) of the previous corollary implies that $\norm{\DeltaKR_k}_\infty$ is not bounded uniformly in $k$ if
	\[ b \limsup_{k \to\infty} \abs{D_k(\rootvert)}^{1/k} = b \bargr T > 1 , \]
since in this case $\delta_k(\rootvert)$, and hence $\norm{\delta_k}_{\ell^\infty(V_k)}$, is not bounded uniformly in $k$.
Such a uniform bound is required to establish that the sequence of depth-$k$ marginals is a normal L\'evy family, so we see that the bound using $\norm{\DeltaKR}_\infty$ is unable to do so in the range $b \in (\frac{1}{\bargr T}, \frac{1}{\sqrt{\bargr T}})$. Note that even for trees regular enough that $\maxgr T = \bargr T$ this is weaker than Theorem~\ref{thm:growthrate}.

The inequality obtained by Chazottes et al. is essentially inequality (KR) above but with $\norm{\DeltaKR_k}_\infty$ replaced by $\norm{\DeltaKR_k}_2$ (the only other difference is the constant in the exponent). While part (2) of the previous corollary shows that having $\Delta_k^2/\abs{V_k}$ in the exponent of the tail bound (as we do in the present paper) is at least as effective as having $\norm{\DeltaKR_k}_2^2$, note that in the proof of the second part of Theorem \ref{thm:growthrate} we only prove $\Delta_k = O(\sqrt{\abs{V_k}})$ via the bounds
	\[ \Delta_k \leq \norm{\DeltaKR_k}_2 \sqrt{\abs{V_k}} \quad \text{and} \quad \norm{\DeltaKR_k}_2 \leq \sum_{r=0}^k b^r \norm{Q^r} = O(1) \]
	(using that $\DeltaKR = \sum_{r=0}^\infty b^r Q^r$ in the present context); therefore the inequality with $\norm{\DeltaKR_k}_2$ is sufficient to establish the second part of Theorem \ref{thm:growthrate} at least in the case of the Ising model.
	
We reiterate the remark made above, however, that this bound on $\Delta_k$ may be suboptimal; a sharper bound may yield a weaker condition than $b^2 \maxgr T < 1$ that ensures $\Delta_k = O(\sqrt{\abs{V_k}})$. On the other hand, Proposition \ref{prop:opnormbad} shows that $\norm{\DeltaKR_k}_2$ is unbounded if $b^2 \maxgr T > 1$, so that this part of Theorem \ref{thm:growthrate} cannot be improved using the inequality of Chazottes et al.

\bibliographystyle{plain}
\bibliography{references}

\end{document}